\documentclass[11pt]{amsart}
\usepackage{lmodern}
\usepackage{amsmath, amsthm, amssymb}
\usepackage{amsfonts}
\usepackage[normalem]{ulem}
\usepackage{hyperref}

\usepackage[utf8]{inputenc}

\usepackage[margin=3.5cm]{geometry}

\usepackage{verbatim} 
\usepackage{longtable}
\usepackage{faktor}
\usepackage{mathtools}

\usepackage{tikz}
\usetikzlibrary{decorations.pathmorphing}
\tikzset{snake it/.style={decorate, decoration=snake}}

\usepackage{caption}
\usepackage{tikz-cd}
\usetikzlibrary{arrows}

\usepackage{diagbox}
\usepackage{booktabs}

\usepackage{pdflscape}


\DeclareSymbolFont{bbold}{U}{bbold}{m}{n}
\DeclareSymbolFontAlphabet{\mathbbold}{bbold}

\newtheoremstyle{mytheorem}
{3pt}
{3pt}
{\itshape}
{}
{\bf}
{.}
{.5em}
{}

\theoremstyle{mytheorem}

\newtheorem{thm}{Theorem}[section]
\newtheorem{cor}[thm]{Corollary}
\newtheorem{lem}[thm]{Lemma}

\theoremstyle{definition}

\newtheorem{example}[thm]{Example}

\theoremstyle{remark}
\newtheorem{rmk}[thm]{Remark}

\newcommand{\BR}{{\mathbb{R}}}

\newcommand{\CC}{{\mathbb C}}

\newcommand{\QQ}{\mathbb{Q}}

\DeclareFontFamily{OT1}{rsfs}{}
\DeclareFontShape{OT1}{rsfs}{n}{it}{<-> rsfs10}{}
\DeclareMathAlphabet{\curly}{OT1}{rsfs}{n}{it}

\newcommand{\FP}{\ZZ_p}
\newcommand{\An}{A_n^\bullet}

\newcommand{\Part}{\operatorname{Comb}}
\newcommand{\Comb}{\operatorname{Comb}}

\newcommand{\Z}{\mathbb{Z}}
\newcommand{\D}{\operatorname{D}}

\newcommand{\Ins}{\operatorname{Ins}}

\newcommand{\ZZ}{\mathbb{Z}}

\newcommand{\Perm}{\operatorname{Perm}}

\newcommand{\Coker}{\operatorname{Coker}}
\newcommand{\Ker}{\operatorname{Ker}}
\newcommand{\rk}{\operatorname{rk}}
\newcommand{\im}{\operatorname{Im}}

\newcommand{\size}{\operatorname{size}}
\newcommand{\ind}{\operatorname{ind}}

\newcommand\mydots{,\makebox[1em][c]{.\hfil.\hfil.},}
\newcommand\lmydots{,\makebox[1em][c]{.\hfil.\hfil.}}
\newcommand\rmydots{\makebox[1em][c]{.\hfil.\hfil.},}

\newcommand{ \myring}{ \Lambda \langle x_1, x_2 \mydots y_0, y_1 \lmydots \rangle}

\newcommand{\plh}{%
  {\ooalign{$\phantom{0}$\cr\hidewidth$\scriptstyle\times$\cr}}%
}



\usepackage{setspace}

\begin{document}

\title[Integral Cohomology of Configuration Spaces of the Sphere]{Integral Cohomology of Configuration Spaces of the Sphere}
\date{\today}

\author{Christoph Schiessl}
\address{ETH Z\"urich, Department of Mathematics}
\email{christoph.schiessl@math.ethz.ch}

\thanks{ The author was supported by the grant ERC-2012-AdG-320368-MCSK.
	I want to thank Frederik Cohen, Emanuele Delucchi, Emmanuel Kowalski, Paolo Salvatore, Johannes Schmitt, Junliang Shen for very helpful discussions and especially Rahul Pandharipande for his invaluable support.
	This is part of the author's PhD thesis.
}

\begin{abstract}
We compute the cohomology of the unordered configuration spaces of the sphere $S^2$ with integral and with $\ZZ/p \ZZ$-coefficients using a cell complex by Fuks, Vainshtein and Napolitano.
\end{abstract}

\baselineskip=14.5pt

\maketitle

\tableofcontents

\section{Introduction}

\subsection{Representation Stability}

For any topological space $X$, let \[ F_n(X) = \{ (x_1, \dots, x_n) \in X^n | \,  x_i \neq x_j \}\] be the ordered configuration space of $n$ distinct points in $X$. The symmetric group $S_n$ acts on $F_n(X)$ by permuting the points and the quotient  \[ C_n(X) = F_n(X) / S_n \]  is the unordered configuration space. 
The cohomology of configuration spaces has been widely studied, but is hard to compute explicitly.  

As one of the first examples, Arnold \cite{Arn} showed that the integral cohomology groups of $H^*(C_n(\CC), \ZZ)$ satisfy the following properties: 

        \begin{enumerate} 
                \item (Finiteness) All cohomology groups are finite except $H^0(C_n(\CC), \ZZ)= \ZZ$ and $H^1(C_n(\CC), \ZZ) = \ZZ$ for $n \ge 2$. 
                \item (Vanishing) $H^i(C_n(\CC), \ZZ)=0$ for $i \ge n$. 
                \item (Recurrence)  $H^i(C_{2n+1}(\CC), \ZZ) = H^i(C_{2n}(\CC), \ZZ)$ 
                \item (Stability) For increasing $n$, the cohomology groups stabilize: \[H^i(C_n(\CC), \ZZ) = H^i(C_{n+1}(\CC), \ZZ) \text{ if } n \ge 2i-2.\] The isomorphism is induced by pushing in points from infinity, for example by the map 
                        \[ (z_1, \dots, z_n) \mapsto (z_1, \dots, z_n, 1 + \max |z_i|).\] 
        \end{enumerate}

For rational coefficients, Church \cite[Cor. 3]{church} could prove that 
\[ H^r(C_n(M), \mathbb{Q}) = H^r(C_{n+1}(M), \mathbb{Q}) \text{ if } n>r+1 \] for any connected, orientable manifold $M$ of finite type. This is called \emph{homological stability}. One example is (\cite{sevryuk}, \cite{randal}, \cite{salvatore}):
\[ H^r(C_n(S^2), \mathbb{Q}) = \begin{cases} \QQ & n \ge 3, r = 3 \\ \QQ & n=1, r=2 \\ \QQ & r=0 \\ 0 & \text{otherwise} \end{cases} \]

With integer coefficients however, homological stability turns out to be false in general. For example the computation of $\pi_1 C_n(S^2)$ in \cite[Th. 1.11]{pi} shows that: 
\[ H_1(C_n(S^2), \ZZ) = \ZZ / (2n-2) \ZZ .\] 
With $\ZZ/p\ZZ$-coefficients, homological stability  can be replaced by \emph{eventual periodicity}
\[ H^r(C_n(M), \ZZ/ p \ZZ) = H^r(C_{n+p}(M), \ZZ / p \ZZ) \text{ if } n > 2r \] for any connected manifold $M$ of finite type \cite{Na}, \cite{Can}, \cite{Kup}.

In this paper, we will give an example of this phenomenon by computing the cohomology groups of $C_n(S^2)$ using a cellular complex.
\subsection{ Cohomology of $C_n(\mathbb{C})$}

Let $p$ be a prime. Then Fuks \cite{Fuks} (for $p=2$) and Vainshtein \cite{Vain} gave a combinatorial formula for the groups $H^r(C_n(\mathbb{C}), \FP)$. Define
\[ B_p(n, r) = \left | \left \{ \begin{matrix} 1 \le a_1 \le a_2 \le \dots \le a_g \\   0 \le b_1 < b_2 < \dots < b_h \end{matrix} \middle |  \begin{matrix} \ 2\sum_i p^{a_i}+ 2\sum_j p^{b_j} -  2g-h = r \\ 2\sum_i p^{a_i}+ 2\sum_j p^{b_j}  \le n \end{matrix} \right \} \right |.\] 
They could show that 
\[ \dim H^r(C_n(\mathbb{C}), \ZZ / p \ZZ) = B_p(n,r) .\]

\subsection{Cohomology of $C_n(S^2)$}

Using a cellular decomposition of $C_n(S^2)$ by Napolitano \cite{napolitano}, we compute the cohomology groups of $C_n(S^2)$ with $\ZZ / p \ZZ$-coeffcients in this paper. 
\begin{thm} \label{mainfp}
	Let \[ B'_p(n, r) = \left | \left \{ \begin{matrix} 1 \le a_1 \le a_2 \le \dots \le a_g \\   1 \le b_1 < b_2 < \dots < b_h \end{matrix} \middle |  \begin{matrix} \ 2\sum_i p^{a_i}+ 2\sum_j p^{b_j}+1 -  2g-h = r \\ 2\sum_i p^{a_i}+ 2\sum_j p^{b_j}+2  \le n \\ p \nmid 2( n -2\sum_i p^{a_i}- 2\sum_j p^{b_j} - 1)   \end{matrix} \right \} \right |. \] 
Then 
\[ \dim H^r(C_n(S^2), \ZZ / p \ZZ) = B_p(n,r)+ B_p(n-1, r-2) - B'_p(n, r)- B'_p(n, r-1).\]
\end{thm}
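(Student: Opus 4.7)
The plan is to analyze $C_n(S^2)$ via the stratification $C_n(S^2) = C_n(\CC) \sqcup C_{n-1}(\CC)$, where the closed stratum $C_{n-1}(\CC)$ consists of configurations containing $\infty$. The associated Gysin-type long exact sequence reads
\[ \cdots \to H^{r-2}(C_{n-1}(\CC), \ZZ/p\ZZ) \to H^r(C_n(S^2), \ZZ/p\ZZ) \to H^r(C_n(\CC), \ZZ/p\ZZ) \xrightarrow{\delta^r} H^{r-1}(C_{n-1}(\CC), \ZZ/p\ZZ) \to \cdots \]
A standard rank count, combined with the Fuks-Vainshtein formulas for the two strata, gives
\[ \dim H^r(C_n(S^2), \ZZ/p\ZZ) = B_p(n, r) + B_p(n-1, r-2) - \rk(\delta^r) - \rk(\delta^{r-1}), \]
matching the first two summands of the theorem and reducing the problem to computing the ranks of $\delta$.

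To evaluate $\delta$ I would use Napolitano's cell complex for $C_n(S^2)$, which extends the Fuks-Vainshtein cell decomposition of $C_n(\CC)$ by incorporating the stratum at $\infty$. My expectation is that on the Fuks-Vainshtein basis, only classes indexed by $(a_1 \le \cdots \le a_g;\ b_1 < \cdots < b_h)$ with $b_1 = 0$ map nontrivially under $\delta^r$; their image is the Fuks-Vainshtein basis class indexed by $(a_1, \ldots, a_g;\ b_2, \ldots, b_h)$ in $H^{r-1}(C_{n-1}(\CC), \ZZ/p\ZZ)$, weighted by a coefficient $\pm 2(n - s - 1) \pmod p$, where $s = 2\sum_i p^{a_i} + 2\sum_{j \ge 2} p^{b_j}$ denotes the sum of the truncated target tuple. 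Matching this with the definition of $B'_p(n, r)$ explains each of its three constraints: the bound $b_j \ge 1$ is automatic after dropping $b_1 = 0$; the inequality $s + 2 \le n$ is the Fuks-Vainshtein size bound on the source cell in $C_n(\CC)$; the equation $r = s + 1 - 2g - h$ records that the source degree is one above the target degree; and $p \nmid 2(n - s - 1)$ is exactly the condition for the coefficient to survive reduction modulo $p$. Consequently $\rk(\delta^r) = B'_p(n, r)$ and $\rk(\delta^{r-1}) = B'_p(n, r-1)$, which substituted into the formula above yields the theorem.

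The main obstacle is the explicit computation of the connecting-map coefficient $\pm 2(n - s - 1)$ from Napolitano's attaching maps. This requires a careful analysis of how a Fuks-Vainshtein cell of $C_n(\CC)$ degenerates as a configuration point escapes to infinity, keeping track of orientations and multiplicities in the cellular boundary operator, and verifying that no cell with $b_1 \ge 1$ contributes to $\delta$. Once this is established, the identification of $\rk(\delta^r)$ with $B'_p(n, r)$ is a routine combinatorial verification and the theorem follows by substitution.
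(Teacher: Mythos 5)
Your skeleton is exactly the paper's: the long exact sequence you write down is the mapping-cone sequence of Napolitano's complex $B_n^\bullet = A_n^\bullet \oplus A_{n-1}^\bullet[2]$ (Lemma 3.2 of the paper), the rank count is the same, and you have even correctly guessed the answer for the connecting map — only Vainshtein classes divisible by $y_0$ survive, the image is the class with $y_0$ deleted, and the coefficient is $\pm 2(n-m+1)$ where $m$ is the size of the source monomial (your $n-s-1$ equals $n-m+1$ since $y_0$ has size $2$). So there is nothing wrong with the route.

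The gap is that the step you defer as ``the main obstacle'' is in fact the entire content of the proof, and you have mislocated where the difficulty sits. The cellular boundary between the two strata is not something you need to extract from attaching maps: Napolitano already gives it explicitly as the chain-level operator $\D[n_1,\dots,n_s] = \sum_i Q(n_i)(-1)^{\sum_{j<i} n_j}[n_1,\dots,n_i-1,\dots,n_s]$. The real problem is computing the \emph{induced} map $D^*$ on cohomology: the Vainshtein basis classes are represented by complicated cycles ($\Ins_t$ of signed permutation sums of compositions like $[2p^{i-1}, 2p^{i-1}(p-1),\dots]$), and applying $\D$ to such a cycle does not visibly produce a multiple of another basis cycle. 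The paper's device for this — which your proposal does not supply and which does not follow from ``a routine combinatorial verification'' — is the explicit operator $S\colon A_n^r \to A_{n-1}^{r-2}$ inserting a $1$ and subtracting $2$ from an entry, for which $E = \D - 2\delta S - 2S\delta$ is computed on all compositions (Lemma 4.1) and shown to vanish except on monomials divisible by $y_0$, where it gives exactly the scalar $2(-1)^{n-m+1}(n-m+1)$ (Corollary 4.3). Since $2\delta S + 2S\delta$ induces zero on cohomology, $D^* = E^*$, and the rank count becomes the count defining $B'_p(n,r)$. Without this homotopy, or some substitute for it, your claim about which classes die and with what coefficient remains a conjecture, so the proposal as written is a correct plan with its central lemma unproved.
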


\begin{cor}
	We have
	\[ \dim H^r(C_n(S^2), \ZZ / 2 \ZZ) = B_2(n,r)+ B_2(n-1, r-2).\]
\end{cor}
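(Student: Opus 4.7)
The plan is to derive the corollary as an immediate specialization of Theorem~\ref{mainfp} at $p=2$, by showing that the correction term $B'_2(n,r)+B'_2(n,r-1)$ vanishes identically.

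My first step is to examine the third constraint in the definition of $B'_p(n,r)$, namely
\[ p \nmid 2\Bigl( n - 2\sum_i p^{a_i} - 2\sum_j p^{b_j} - 1 \Bigr). \]
For a general prime this is a genuine restriction, but when $p=2$ the quantity inside the parentheses is multiplied by $2$, so the expression $2(n-2\sum 2^{a_i}-2\sum 2^{b_j}-1)$ is automatically even. Hence the non-divisibility condition $2 \nmid 2(\cdots)$ can never be satisfied, so the set being counted is empty.

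Consequently $B'_2(n,r)=0$ for all $n$ and $r$, and likewise $B'_2(n,r-1)=0$. Substituting into the formula of Theorem~\ref{mainfp} collapses it to
\[ \dim H^r(C_n(S^2),\ZZ/2\ZZ) = B_2(n,r) + B_2(n-1,r-2), \]
which is the claim.

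Since the entire argument is a one-line observation about parity, there is essentially no obstacle once Theorem~\ref{mainfp} is granted; the only thing to double-check is that I have correctly parsed the third clause of the definition of $B'_p(n,r)$ (that it is indeed $2$ times the bracketed quantity, not the bracketed quantity alone), which makes the $p=2$ case degenerate.
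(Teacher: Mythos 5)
Your proof is correct and is essentially the argument the paper intends: since the third condition in the definition of $B'_p(n,r)$ reads $p \nmid 2(\cdots)$, it is vacuously violated for $p=2$, so $B'_2(n,r)=B'_2(n,r-1)=0$ and Theorem~\ref{mainfp} collapses to the stated formula. The paper also records the same fact more structurally later (the corollary $D \equiv 0 \bmod 2$, which splits $B_n^\bullet \otimes \ZZ_2$ as $(A_n^\bullet \oplus A_{n-1}^\bullet)\otimes \ZZ_2$), but your parity observation is exactly why that correction term vanishes, so nothing is missing.
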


Eventual periodicity of $H^r(C_n(S^2), \ZZ / p \ZZ)$ can be directly concluded from this description. Theorem \ref{mainfp} could also be deduced from \cite[Th. 18.3]{salvatore}. However, our approach is more elementary and allows to determine the integral cohomology:

\begin{thm} \label{mainzz} The first cohomology groups $H^r(C_n(S_2), \ZZ)$ are 
	\begin{align*} 
		& H^0(C_n(S_2), \ZZ) = \ZZ  & &H^1(C_n(S_2), \ZZ) = 0 \\
		& H^2(C_n(S_2), \ZZ) =  \ZZ / (2n-2) \ZZ &   & H^3(C_n(S_2), \ZZ) = \begin{cases} 0 & n=1,2 \\ \ZZ & n = 3 \\ \ZZ \times \ZZ/2 \ZZ & n \ge 4 \end{cases} \end{align*}
			 For $r \ge 4$, the cohomology groups $H^r(C_n(S^2), \ZZ)$ are finite and contain no elements of order $p^2$. 
\end{thm}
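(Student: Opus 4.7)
The strategy is to deduce the integral cohomology from Theorem \ref{mainfp}, from the cited rational computation, and from the known value $H_1(C_n(S^2), \ZZ) = \ZZ/(2n-2)\ZZ$, using the universal coefficient theorem as the main bridge. For a finite-type space the UCT gives
\[ \dim_{\FP} H^r(X, \FP) = \rk H^r(X, \ZZ) + t_p(H^r(X, \ZZ)) + t_p(H^{r+1}(X, \ZZ)), \]
where $t_p$ counts $p$-primary cyclic summands; this identity is the arithmetic backbone of everything below.

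Low degrees are essentially formal. One has $H^0 = \ZZ$ by connectedness, $H^1 = \Hom(H_1, \ZZ) = 0$ since $H_1$ is torsion, and $H^2 \supseteq \Ext(H_1, \ZZ) = \ZZ/(2n-2)\ZZ$ from UCT; the rational vanishing $H^2(C_n(S^2), \QQ) = 0$ for $n \ge 3$ kills any free contribution, and the cases $n = 1, 2$ are handled separately. For $H^3$, the rational computation supplies the free $\ZZ$-summand when $n \ge 3$; the extra $\ZZ/2$ for $n \ge 4$ is then forced by evaluating Theorem \ref{mainfp} at $p = 2$ and $r = 3, 4$, and absence of odd torsion is verified by evaluating at odd primes.

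For $r \ge 4$ finiteness is immediate from $H^r(C_n(S^2), \QQ) = 0$. The substantive statement is the absence of $p^2$-torsion, which is invisible to mod-$p$ Betti numbers alone (they cannot distinguish $\ZZ/p^2$ from $\ZZ/p \oplus \ZZ/p$). My plan is to use the Bockstein spectral sequence associated with $0 \to \ZZ \xrightarrow{p} \ZZ \to \FP \to 0$, whose $E_1$-page is $H^*(X, \FP)$ with differential equal to the Bockstein $\beta$ and whose $E_\infty$-page records only the non-$p$-torsion. Vanishing of higher $p$-torsion is equivalent to $E_2 = E_\infty$; I would identify the subtraction $-B'_p(n, r) - B'_p(n, r-1)$ in Theorem \ref{mainfp} with exactly twice the rank of the image of $\beta$, so that the Bockstein alone accounts for the difference between mod-$p$ and rational Betti numbers, leaving no room for further pages.

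The crux of the argument is this identification. The cleanest route, I believe, is to revisit Napolitano's cellular complex with integer coefficients on the family of cells that produces the terms $B'_p$, checking that the relevant differentials have matrix entries of the form $\pm 1$ or $\pm p$ and therefore Smith normal forms with elementary divisors only $1$ or $p$, never $p^2$. The combinatorial bookkeeping required to pin down these entries---essentially the integral refinement of the mod $p$ argument that proves Theorem \ref{mainfp}---is the main technical obstacle, and in particular forces one to track signs that were irrelevant over $\FP$.
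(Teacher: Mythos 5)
Your handling of degrees $r\le 2$ is fine, but the heart of the theorem --- that the torsion of $H^r(C_n(S^2),\ZZ)$ has exponent $p$ --- is not established by either of the routes you sketch, and note that this is needed already at $r=3$ to rule out $\ZZ/4$ in place of $\ZZ/2$ (the mod-$2$ Betti numbers from Theorem \ref{mainfp} only tell you the \emph{number} of cyclic $2$-primary summands, not their orders, as you yourself observe for $r\ge 4$). Both of your proposed routes have concrete defects. First, the identification of $B'_p(n,r)+B'_p(n,r-1)$ with twice the rank of the Bockstein on $H^*(C_n(S^2),\FP)$ is an unproven assertion: those terms arise in Theorem \ref{mainfp} as ranks of the connecting map $D^*$ in the long exact sequence of the mapping cone $B_n^\bullet=\mathrm{Cone}(D)$, and relating them to the integral Bockstein of the total complex requires exactly the integral control of the extension you are trying to prove; as a numerical identity it is a consequence of the theorem, not an argument for it. Second, the Smith normal form claim is false as stated: a matrix with entries in $\{0,\pm1,\pm p\}$ can perfectly well have $p^2$ as an elementary divisor, e.g.\ $\left(\begin{smallmatrix} p & 1 \\ 0 & p\end{smallmatrix}\right)$ has Smith normal form $\mathrm{diag}(1,p^2)$, so even after the ``combinatorial bookkeeping'' you defer, the conclusion would not follow from the shape of the entries alone.

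The paper closes this gap differently. The long exact sequence of Lemma \ref{longexact} yields short exact sequences $0\to \Coker D^*\to H^r(B_n^\bullet)\to \Ker D^*\to 0$ in which both outer groups have $p$-torsion of exponent $p$ by Vainshtein's Theorem \ref{mainvain}; but this alone does not suffice, since an extension of $\ZZ/p$ by $\ZZ/p$ can be $\ZZ/p^2$. The essential ingredient is an explicit right splitting $s\colon \Ker D^*\to H^r(B_n^\bullet,\ZZ)$, built from the ``almost null homotopy'' $S$ of Lemma \ref{E1} and the operator $E=D-2S\circ\delta-2\delta\circ S$ evaluated on the monomials $m_i$ with $b_i=\tfrac{1}{p}\delta(m_i)$: one checks that $s(\bar b_i)$ is a cocycle and that $p\,s(\bar b_i)=\Delta(m_i,S(m_i))$ is a coboundary. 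Some such integral splitting --- or an equivalent direct computation of the Bockstein cohomology of $B_n^\bullet\otimes\FP$ itself --- is the missing ingredient your plan would need to supply.
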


	Hence we can reconstruct all integral cohomology groups by theorem \ref{mainfp} and the universal coefficient theorem. The description of $H^r(C_n(S^2), \ZZ)$ seems to be new.

	We will first explain the computations of the cohomology of $C_n(\mathbb{C})$  with $\ZZ/p \ZZ$-coefficients by Fuks \cite{Fuks} and Vainshtein \cite{Vain} and discuss their cell complex. Afterwards, we present the extension of this cell complex that Napolitano \cite{napolitano} used to calculate $H^*(C_n(S^2), \ZZ)$ for $n \le 9$. The main idea of this paper is the construction of a chain homotopy that simplifies Napolitano's complex.  

\section{Configuration Spaces of the Plane}

\subsection{Conventions}
We write \[ \Comb(n,q) = \{ \, [n_1, \dots, n_q ] \in \Z_{> 0}^q \, | \, n_1 + \dots + n_q = n \} \] for compositions of $n$ into $q$ positive summands, for example
\[ \Comb(5,3) = \{ [3,1,1] ,\, [1,3,1],\, [1,1,3],\,  [2,2,1],\, [2, 1, 2],\, [1,2,2] \, \}.\]
We call $q$ the length and $n$ the size of the composition.

The residue ring $\ZZ / m \ZZ$ is denoted $\ZZ_m$. For any abelian group $G$ and prime $p$, we write $G_{(p)} = \{ g \in G | \,  p^ng =0 \text{ for some } n \}$ for the $p$-torsion subgroup. 
	\subsection{Cellular Decomposition of $\overline{C_n(\mathbb{C})}$} The following construction comes from \cite{Fuks} and \cite{Vain}: The projection \[ \mathbb{C} \to \BR, x+iy \mapsto x \] to the real line maps any configuration in $C_n(\mathbb{C})$ to a finite sets of points in $\BR$. Counting the number of preimages of each of these points, we get a composition of $n$. The union of all points in $C_n(\mathbb{C})$ mapping to the same composition $n = n_1 + \dots + n_s$ and the point $\infty$ is a $n+s$-dimensional cell in the one point compactification $\overline{C_n(\mathbb{C})}$. We denote this  cell by $[n_1, \dots, n_s]$.  All such cells together with the point $\infty$ are a cellular decomposition of $\overline{C_n(\mathbb{C})}$. Using Poincaré-Lefschetz duality for Borel-Moore homology \cite{ginzburg} \cite{vassiliev}\[ H^i(C_n(\mathbb{C})) = \tilde{H}_{2n-i}(\overline{C_n(\mathbb{C})}),\] this cell complex can be used to compute the cohomology of $C_n(\mathbb{C})$. 

	The (co)-chains of the resulting (co)-complex $A_n^{\bullet} = (A_n^{r})_r$ with the property \[ H^*(C_n(\CC), \ZZ ) = H^*(A_n^\bullet)\] are the free $\ZZ$-modules  \[ A_n^r = \Z \Part(n,n-r). \] The basis elements are the compositions $[n_1, \dots, n_s] \in \Comb(n, s)$ with $s= n-r$. The boundary maps $\delta \colon A_n^r \to A_n^{r+1}$ are 

\[ \delta [n_1, \dots, n_s] = \sum_{l=1}^{s-1} (-1)^{l-1} P(n_l,n_{l+1})  [ n_1, \dots, n_{l-1}, n_l +n_{l+1}, n_{l+2}, \dots, n_s ]\] 
where
\[ P(x,y) = \begin{cases} 
	0 & \text{if } x \equiv y \equiv 1 \mod 2, \\
	\dbinom{ \lfloor x/2+y/2 \rfloor}{ \lfloor x/2 \rfloor } & \text{otherwise.}
\end{cases}
\]

\subsection{Cohomology of $C_n(\mathbb{C})$}

As $P(x,y)= 0$ for odd $x$ and $y$, the complex $A_n^\bullet$ can be written as a direct sum \[ A_n^\bullet = A_{n,0}^\bullet \oplus \dots \oplus A_{n,n}^\bullet\] of subcomplexes $A_{n,t}^\bullet$ generated by compositions with $t$ odd entries.

Take any $I \subset \{1, \dots, s+t\}$ with $t$ elements, say $I = \{ i_1, \dots, i_t\}$ where $i_1 < \dots < i_t$. Then we insert $1's$ at the positions $i_1$ to $i_t$ with alternating signs:
\[ \Ins_I [a_1, \dots, a_s] = (-1)^{ \sum_j i_j} [a_1, \dots, a_{i_1 -1}, 1, a_{i_1}, \dots, a_{i_2 -2}, 1, a_{i_2-1}, \dots ] \]
The map 
	\[ \Ins_t = (-1)^{st} \sum_{I \subset \{1, \dots, s+t\}, |I| =t} \Ins_I \] 
is actually a chain map \[ \Ins_t \colon A_{n,0}^\bullet  \to A_{n+t,t}^\bullet \] that induces isomorphisms \[ H^r(A_{n-t,0}^\bullet) \simeq H^r(A_{n,t}^\bullet). \]
	Hence we get \[H^*(A_n^\bullet) = H^*(A_{n,0}^\bullet) \oplus H^*(A_{n-1,0}) \oplus \cdots \oplus H^*(A_{0,0}^\bullet). \]
As $A_{n,0}^r = 0$ if $n \equiv 1 \mod 2$ or $n > 2r$, we can immediately deduce the  properties of recurrence and stability of Arnolds description. We write
\[ H^r(C_\infty(\CC)) = \lim_{n \to \infty} H^r(C_n(\CC)). \]

\begin{example}
	The cohomology group $H^0(C_n(\CC), \ZZ) = \ZZ$ is generated by the class of $ (-1)^{n(n-1)/2}[1,\dots, 1 ] = \Ins_n([\,])$. For $n \ge 2$, the cohomology group
	$H^1(C_n(\CC), \ZZ) = \ZZ$ is generated by the class of $[2, 1, \dots, 1] - [1,2, 1, \dots, 1] + \dots = (-1)^{(n-2)(n-3)/2+n} \Ins_{n-2} [2]$.
\end{example}

\subsection{Explicit Basis of $H^*(A_{n,0}^\bullet, \FP)$}
	We will now present the description of the group $H^r(A_{n,0}^\bullet, \FP)$ by Vainshtein and work out some of the details and proofs omitted in \cite{Vain}. In particular, the explicit formula for the base elements is misleading and seems to be wrong in the stated form in \cite{Vain}.

Let $[n_1, \dots, n_s]$ be any composition of $n$. Then the alternating sum of its permutations
\[ \sum_{\sigma \in S_s} \operatorname{sign} (\sigma) [n_{\sigma(1)}, \dots, n_{\sigma(s)}] \] is a cycle in $\An$. With $\FP$-coefficients, the following subset of permutations
	\[ \Perm [n_1, \dots, n_s] = \sum_{\begin{subarray}{l} \sigma \in S_s \text{ where } \sigma(i) < \sigma(j) \\ \text{if } i < j \text{ and } n_i = n_j \text{ or } \\ \text{if } i < j \text{ and } P(n_i,n_j) = 0 \text{ mod } p \end{subarray}}\operatorname{sign} (\sigma) [n_{\sigma(1)}, \dots, n_{\sigma(s)}] \]
creates a cycle in $A_n^\bullet \otimes \FP$. 

Take integers $1 \le i_1 \le  \dots \le i_k$ and $0 \le j_1 < \dots < j_l$ such that \[ m= n -  2 ( p^{i_1} + \dots + p^{i_k} + p^{j_1} + \dots + p^{j_l} ) \ge 0 \] and let  \[r = (2 p^{i_1} -2) + \dots + (2 p^{i_k} -2) +(2 p^{j_1} -1) + \dots + (2 p^{j_l} -1).\] Then we give the chain
	\[ \Ins_m \Perm [2p^{i_1-1}, 2p^{i_1-1}(p-1) \mydots 2p^{i_k-1}, 2p^{i_k-1}(p-1), 2p^{j_1} \mydots 2p^{j_l}] \] the name $x_{i_1} \cdots x_{i_k} y_{j_1} \cdots y_{j_l}$. It is a cycle in $A_{n,m}^r \otimes \FP$ (but not in $\An$ if $k>0$). Vainshtein showed that all such cycles form a basis of $H^r(A_{n}^\bullet, \FP)$. We call the quantity $n-m$ the size of the chain $x_{i_1} \cdots x_{i_k} y_{j_1} \cdots y_{j_l}$.
\begin{thm} \cite{Vain} \cite{cohen} \label{basisfp}
	The ring $H^*(C_\infty, \FP)$ is the free graded commutative algebra over $\FP$ with generators \begin{align*} x_i \text{ for } i \ge 1 & &  \deg(x_i)= 2p^i-2  & & \size(x_i) = 2p^{i} \\ y_i \text{ for }  i \ge 0  & & \deg(y_i) = 2p^i-1 & & \size(y_i) = 2p^i. \end{align*} There is a surjection $H^*(C_\infty(\CC), \FP) \to H^*(C_n(\CC), \FP)$ whose kernel is generated by the monomials $x_{i_1} \cdots x_{i_k} y_{j_1} \cdots y_{j_l}$ such that $\size(x_{i_1} \cdots x_{i_k} y_{j_1} \cdots y_{j_l}) > n$.
\end{thm}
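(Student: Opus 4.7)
The strategy is to realize each monomial $x_{i_1}\cdots x_{i_k} y_{j_1}\cdots y_{j_l}$ as an explicit cycle $\Ins_m \Perm[\ldots]$ in the cell complex, count these cycles against the Fuks--Vainshtein formula $\dim H^r(C_n(\CC), \FP) = B_p(n,r)$ recorded above, and obtain the ring structure from the little-disc operad.

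First I would verify that $\Perm[n_1,\ldots,n_s]$ is a $\delta$-cycle modulo $p$. The boundary merges adjacent entries $(n_l, n_{l+1})$ with coefficient $\pm P(n_l, n_{l+1})$, so the key point is that the restricted permutation sum in $\Perm$ keeps exactly those $\sigma$ for which any pair with $n_i = n_j$ or $p \mid P(n_i, n_j)$ has a fixed relative order. A sign-reversing involution --- swap the two adjacent entries whenever permitted --- pairs each nonzero merge term with its negative, so $\delta \Perm \equiv 0 \pmod{p}$. Because $\Ins_m$ is a chain map, the composite $\Ins_m \Perm[\ldots]$ is likewise a cycle in $A_{n,m}^\bullet \otimes \FP$.

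Next I would match the count. A monomial $x_{i_1}\cdots x_{i_k} y_{j_1}\cdots y_{j_l}$ has degree $2\sum_{\alpha} p^{i_\alpha} + 2\sum_{\beta} p^{j_\beta} - 2k - l$ and size $2\sum_{\alpha} p^{i_\alpha} + 2\sum_{\beta} p^{j_\beta}$. Setting $(a_1,\ldots,a_g) = (i_1,\ldots,i_k)$ and $(b_1,\ldots,b_h) = (j_1,\ldots,j_l)$, the enumeration of monomials of degree $r$ and size at most $n$ reproduces $B_p(n,r)$ verbatim. Linear independence then follows by picking the sorted composition as a leading term under lexicographic order: distinct monomials lie either in different Insertion summands $A_{n,m}^\bullet$ (different sizes) or yield distinct leading compositions within the same summand. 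Combined with the dimension match, this proves the monomials form a basis of $H^r(C_n(\CC), \FP)$ for all $n$, and passing to the direct limit gives a basis of $H^r(C_\infty(\CC), \FP)$.

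For the ring structure I would use the Pontryagin product on $H^*(C_\infty(\CC), \FP)$ induced by placing two configurations side by side in the plane; at the level of cells this concatenates compositions, matching our monomial construction up to coboundary. Since the count of monomials already saturates the full dimension, no relations can hold beyond graded commutativity, so the algebra is free on $\{x_i, y_j\}$. The kernel description is then immediate: a monomial of size $s > n$ has no cellular representative in $A_n^\bullet$ at all, while those of size $s \le n$ are nonzero by the basis result. The hard part will be the cancellation in step one; deciding when $p \mid P(n_i, n_j)$ requires Kummer's theorem on binomial coefficients, and the careful choice of entries $(2p^{a-1}, 2p^{a-1}(p-1))$ defining $x_a$ is exactly what forces any cross-merge with another factor of the monomial to produce a base-$p$ carry and therefore a vanishing coefficient, so that the sign-reversing involution accounts for every surviving boundary term.
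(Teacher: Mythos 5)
The paper does not prove this theorem; it is imported wholesale from Vainshtein and Cohen (the text only says ``Vainshtein showed that all such cycles form a basis''), so there is no in-paper argument to compare against. Judged on its own terms, your proposal has a genuine gap at its center: chain-level linear independence of the representing cycles does not give linear independence of their classes in cohomology. Your leading-term argument shows that the $B_p(n,r)$ chains $\Ins_m\Perm[\ldots]$ are linearly independent elements of $A_n^r\otimes\FP$, but a priori some nontrivial combination of them could lie in $\im\delta$ (or all of them could be coboundaries); matching their number against $\dim H^r$ proves nothing unless you separately establish either spanning or independence \emph{modulo boundaries}. That step --- controlling $\im\delta$, e.g.\ by a filtration of the complex or by exhibiting dual cocycles pairing nondegenerately with these cycles --- is the actual content of Vainshtein's computation, and the proposal does not address it. Compounding this, the input you lean on, $\dim H^r(C_n(\CC),\FP)=B_p(n,r)$, is in this paper a \emph{corollary} of the very theorem you are proving; unless you supply an independent derivation of that dimension count (which is again the hard part of Fuks--Vainshtein), the argument is circular.

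Two smaller points. First, the claim that $\Perm[n_1,\dots,n_s]$ is a cycle mod $p$ for an arbitrary composition is false: for $[2,2]$ the restricted sum is the single term $[2,2]$ and $\delta[2,2]=\binom{2}{1}[4]\not\equiv 0$ for $p>2$. Your sign-reversing involution only cancels merges of pairs whose relative order is unrestricted; for pairs with forced order one needs $P(n_i,n_j)\equiv 0$, and for \emph{equal} entries (also forced order) even that can fail, so one must check that equal entries are never adjacent in any retained permutation of the specific compositions $[2p^{i-1},2p^{i-1}(p-1),\dots,2p^{j}, \dots]$. You gesture at the Lucas/Kummer computation, but this case analysis is where the claim actually lives. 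Second, the product you invoke is not the cup product but the one induced by $C_n(\CC)\times C_m(\CC)\to C_{n+m}(\CC)$ (as the paper's remark notes); that is fine, but ``freeness'' of the algebra on $\{x_i,y_j\}$ is equivalent to the monomials being a basis, so it cannot be derived from the dimension count any more easily than the basis statement itself.
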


\begin{cor} 
Define
\[ B_p(n, r) = \left | \left \{ \begin{matrix} 1 \le a_1 \le a_2 \le \dots \le a_g \\   0 \le b_1 < b_2 < \dots < b_h \end{matrix} \middle |  \begin{matrix} \ 2\sum_i p^{a_i}+ 2\sum_j p^{b_j} -  2g-h = r \\ 2\sum_i p^{a_i}+ 2\sum_j p^{b_j}  \le n \end{matrix} \right \} \right |. \] 
Hence we have
\[ \dim H^r(C_n(\mathbb{C}), \FP) = B_p(n, r). \]
\end{cor}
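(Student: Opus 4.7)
The plan is to deduce the corollary directly from Theorem \ref{basisfp} by counting monomials. By the theorem, $H^*(C_\infty(\CC), \FP)$ is the free graded commutative algebra on generators $x_i$ ($i \ge 1$) and $y_j$ ($j \ge 0$), and $H^*(C_n(\CC), \FP)$ is obtained by killing the ideal spanned by those monomials of size strictly greater than $n$. So a basis of $H^r(C_n(\CC), \FP)$ is indexed by the monomials in the $x_i$ and $y_j$ of total cohomological degree exactly $r$ and total size at most $n$, and the task reduces to counting such monomials in a fixed normal form.

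Next I would put every such monomial into a unique normal form using graded commutativity. The $x_i$ have even degree $2p^i - 2$ and commute freely, so the $x$-part of any monomial can be written uniquely as $x_{a_1} x_{a_2} \cdots x_{a_g}$ with $1 \le a_1 \le a_2 \le \cdots \le a_g$. The $y_j$ have odd degree $2p^j - 1$; for odd $p$ they anticommute and are therefore square-zero, giving $y_{b_1} y_{b_2} \cdots y_{b_h}$ with $0 \le b_1 < b_2 < \cdots < b_h$. (For $p = 2$ the same strict-inequality convention on the $y$-indices is used implicitly in the cited basis description, via the standard identification of $y_i^2$ with a generator in the $x$-family of matching degree and size; this is the one subtle bookkeeping point in the translation.) For such a monomial the cohomological degree and size are
\[ \deg = \sum_{i=1}^g (2p^{a_i} - 2) + \sum_{j=1}^h (2p^{b_j} - 1) = 2\sum_i p^{a_i} + 2\sum_j p^{b_j} - 2g - h, \]
\[ \size = \sum_{i=1}^g 2 p^{a_i} + \sum_{j=1}^h 2 p^{b_j}. \]

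Imposing the constraints $\deg = r$ and $\size \le n$ recovers exactly the two defining conditions of $B_p(n,r)$, so the number of admissible monomials is $B_p(n,r)$, and hence $\dim H^r(C_n(\CC), \FP) = B_p(n,r)$. The whole argument is essentially a bookkeeping exercise once Theorem \ref{basisfp} is in hand; the main thing I would take care to verify is that the strict inequality on the $b$'s and the weak inequality on the $a$'s do match the normal form of the basis in every characteristic, in particular for $p = 2$.
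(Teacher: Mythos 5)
Your proposal is correct and is essentially the paper's own argument: the corollary follows from Theorem \ref{basisfp} purely by counting the basis monomials $x_{a_1}\cdots x_{a_g} y_{b_1}\cdots y_{b_h}$ of degree $r$ and size at most $n$, and your degree and size computations match the defining conditions of $B_p(n,r)$ exactly. Your cautionary remark about the $p=2$ convention (strictly increasing $b$'s, with $y_j^2$ matching $x_{j+1}$ in degree and size) is the right point to flag and is consistent with how the paper uses the basis.
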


\begin{cor} \cite{salvatore}
This can also be written as a generating series:
	\[ \sum_{n,r \ge 0} B_p(n,r) w^r z^n= \frac{1+wz^2}{1-z}\prod_{i>0}\frac{1+w^{2p^i-1}z^{2p^i}}{1-w^{2p^i-2}z^{2p^i}}\]
\end{cor}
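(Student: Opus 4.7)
The plan is a direct manipulation of the defining sum. The data indexed by $B_p(n,r)$ consists of a weakly increasing tuple $(a_1\le\cdots\le a_g)$ of positive integers (equivalently a finite multiset on $\{1,2,\ldots\}$) together with a strictly increasing tuple $(b_1<\cdots<b_h)$ of non-negative integers (equivalently a finite subset of $\{0,1,\ldots\}$), subject to the two constraints involving $M := 2\sum_i p^{a_i} + 2\sum_j p^{b_j}$, namely $M - 2g - h = r$ and $M \le n$.

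First I would fix the tuples $(a_i)$ and $(b_j)$ and sum over all admissible $n$. Since $r$ is determined by the tuples, and $n$ ranges over $\{M, M+1, M+2, \ldots\}$, one gets
\[
\sum_{n,r\ge 0} B_p(n,r)\,w^r z^n \;=\; \sum_{(a),(b)} w^{M-2g-h}\,\frac{z^M}{1-z},
\]
where the outer sum is over all pairs of tuples as above (with no size constraint anymore). The factor $1/(1-z)$ absorbs the inequality $M\le n$ cleanly and can be pulled out of the sum.

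Next I would factor the remaining sum as an independent product over the $a$-part and the $b$-part, by reading off the contribution of each entry to the monomial $w^{M-2g-h}z^M$. A single $a_i = i \ge 1$ contributes $w^{2p^i-2}z^{2p^i}$, and since the $a$'s form a multiset on $\{1,2,\ldots\}$ their generating function is $\prod_{i\ge 1}(1-w^{2p^i-2}z^{2p^i})^{-1}$. A single $b_j = i \ge 0$ contributes $w^{2p^i-1}z^{2p^i}$, and since the $b$'s form a subset of $\{0,1,\ldots\}$ their generating function is $\prod_{i\ge 0}(1+w^{2p^i-1}z^{2p^i})$. Combining,
\[
\sum_{n,r\ge 0} B_p(n,r)\,w^r z^n \;=\; \frac{1}{1-z}\cdot\prod_{i\ge 0}\bigl(1+w^{2p^i-1}z^{2p^i}\bigr)\cdot\prod_{i\ge 1}\frac{1}{1-w^{2p^i-2}z^{2p^i}}.
\]

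Finally I would split off the $i=0$ factor $1+wz^2$ from the first product and combine the two remaining products indexed by $i\ge 1$ into the single product in the claimed formula. There is no real obstacle here; the only thing worth being careful about is that the $a_i$ run over $i\ge 1$ while the $b_j$ run over $i\ge 0$, which is exactly what produces the asymmetric factor $1+wz^2$ in the numerator.
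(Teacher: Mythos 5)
Your computation is correct: summing $z^n$ over $n\ge M$ produces the factor $z^M/(1-z)$, the multiset of $a$'s gives $\prod_{i\ge1}(1-w^{2p^i-2}z^{2p^i})^{-1}$, the subset of $b$'s gives $\prod_{i\ge0}(1+w^{2p^i-1}z^{2p^i})$, and splitting off the $i=0$ factor $1+wz^2$ yields the stated identity. The paper offers no proof of this corollary (it only cites Salvatore), and your direct generating-function translation of the definition of $B_p(n,r)$ is exactly the intended argument.
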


\begin{rmk}
The notation suggests a product structure on $H^*(C_\infty(\CC), \FP)$. It comes from the map
	\[ C_n(\CC) \times C_m(\CC) \to C_{n+m}(\CC) \]
by adding the points far apart.
\end{rmk}

\begin{rmk}
As \[ \binom{p^a+p^b}{p^a} \equiv  \begin{cases} 1 & a \neq b \\ 2 & a = b \end{cases} \mod p\] and \[\binom{ p^a + p^b(p-1)}{p^a} \equiv \begin{cases} 1 & a \neq b \\ 0 & a = b \end{cases} \mod p\]
	by Lucas's theorem \cite{lucas}, the order of all entries of the form $2p^a$, $2p^a(p-1)$ in our basis elements is preserved by the operator $\Perm$. 
\end{rmk}
\begin{example} \label{twentyfour}
	We compute $H^*(C_{24}(\CC), \ZZ/3 \ZZ)$. The generators have degrees
\begin{center}
	\begin{tabular}{ccccccc}
		generators &	$x_1$ & $x_2$  & $y_0$ & $y_1$ & $y_2$ & \dots \\
		\hline
		degree & 4 & 16 & 1 & 5 & 17 & \dots \\
		size & 6 & 18 & 2 & 6 & 18 & \dots \\
	\end{tabular}
\end{center}
	In table \ref{basiscoh}, we write down the basis elements and the corresponding chains, however we will omit the application of the $\Ins_t$-operators to lift the chains to sum 24.
	\begin{table}[p]
		\caption{ The cohomology group $H^*(C_{24}(\CC), \ZZ_3)$}
		\label{basiscoh}
		\centering
		\begin{tabular}{ c | p{13cm}}
			$r$ & basis of $H^r(C_{24}(\CC), \ZZ_3)$ \\
			\toprule
			0 & $1 = []$ \\
			\midrule
			1 & $y_0 = [2]$ \\
			\midrule
			2 & --\\
			\midrule
			3 & --\\
			\midrule
			4 & $x_1 = [2,4]$ \\
			\midrule
			5 & $y_1 = [6]$ \newline  $x_1 y_0 = [2,4,2]$ \\
			\midrule
			6 & $y_0 y_1 = [2,6] -[6,2]$ \\
			\midrule
			7 & -- \\
			\midrule
			8 & $x_1^2 = [2,4,2,4] $\\
	\midrule
			9 & $x_1 y_1 = [2,4, 6] - [2,6,4] + [6,2,4]$ \newline $x_1^2 y_0 = [2,4,2,4,2]$ \\
	\midrule
			10 & $x_1 y_0 y_1 = [2,4, 2,6] - [2,4,6,2] + [2,6,4,2] - [6,2,4,2] $ \\
			\midrule
			11 & --\\
	\midrule
			12 & $x_1^3 = [2,4,2,4,2,4]$ \\
			\midrule
			13 & $x_1^2 y_1 = [2,4,2,4,6] - [2,4,2,6,4] + [2,4,6,2,4] - [2,6,4,2,4] + [6,2,4,2,4]$ \newline $x_1^3 y_0 = [2,4,2,4,2,4,2]$ \\
		\midrule
		14 & $x_1^2 y_0 y_1 =	[2,4,2,4,2, 6] - [2,4,2,4,6,2] + [2,4,2,6,4,2] - [2,4,6,2,4,2,] + \dots$ \\ 
			\midrule
	15 & --\\
				\midrule
16 & $x_2 = [6, 12]$ \newline $x_1^4 = [2,4,2,4,2,4,2,4]$ \\
			\midrule
			17 & $y_2 = [18]$ \newline $x_2 y_0 = [6,12,2] - [6,2, 12] + [2, 6,12]$ \newline $x_1^3 y_1 = [2,4,2,4,2,4,6] - [2,4,2,4,2,6,4]+ \dots$ \\
				\midrule
18 & $y_0 y_2 = [2, 18] - [18,2]$ \\
		\midrule
		19 & -- \\
			\midrule
	20 & $x_1 x_2 = [2,4, 6,12] - [2,6,4, 12] + [6,2, 4,12] - [6,2, 12, 4] + [2,6,12,4] +  [6,12, 2, 4]$ \\
			\midrule
	21 & $x_1 y_2 = [2,4, 18] - [2,18,4] + [18, 2,4]$ \newline $x_2 y_1 = [6,12, 6]$ \\
		\midrule
		22 & $y_1 y_2 = [6,18] -[18,6]$ \\
			\midrule
	$\ge 23$ & -- \\
		\end{tabular}
\end{table}
\end{example}

%
%

\subsection{Bockstein Homomorphisms}
The short exact sequences of coefficients
\[ 0 \to \ZZ \xrightarrow{p \cdot} \ZZ \to \ZZ_p \to 0 \]
and
\[ 0 \to \ZZ_p \xrightarrow{p \cdot} \ZZ_{p^2} \to \ZZ_p \to 0 \]
induce long exact sequences 
\[ H^{r-1}(\An, \FP) \xrightarrow{\tilde{\beta}} H^r(\An, \ZZ) \xrightarrow{p \cdot}  H^r(\An, \ZZ) \to H^r(\An, \FP) \xrightarrow{\tilde{\beta}} H^{r+1}(\An, \ZZ) \]
and
\[ H^{r-1}(\An, \FP) \xrightarrow{\beta} H^r(\An, \FP) \xrightarrow{p \cdot}  H^r(\An, \ZZ_{p^2}) \to H^r(\An, \FP) \xrightarrow{\beta} H^{i+1}(\An, \FP), \]
where the connecting morphisms are the \emph{Bockstein morphisms} $\beta$ and $\tilde{\beta}$ (compare \cite[Chap.3.E]{hatcherbook}).
The image of $\tilde{\beta}$ are hence all elements of order $p$ in $H^*(\An, \ZZ)$. The following diagram commutes and the upper row is exact:

\begin{tikzpicture}
  \matrix (m) [matrix of math nodes,row sep=3em,column sep=4em,minimum width=2em]
  {
	  H^r(\An, \ZZ) &  H^r(\An, \FP) & H^{r+1}(\An, \ZZ) & H^{r+1}(\An, \ZZ) \\
			& 		&H^{r+1}(\An, \FP) & 		& \\};
  \path[-stealth]
	(m-1-1) edge node [above] {} (m-1-2)
	(m-1-2) edge node [above] {$\tilde{\beta}$} (m-1-3)
	(m-1-3) edge node [above] {$p \cdot$} (m-1-4)
	(m-1-3) edge node [right] {} (m-2-3)
	(m-1-2) edge node [below] {$\beta$} (m-2-3)
	;
\end{tikzpicture}
\begin{example} Let $i \neq j$. We determine the Bockstein on $x_{i} = [2p^{i-1}, 2p^{i-1}(p-1)]$ and $x_i y_j=  [2p^{i-1}, 2p^{i-1}(p-1), 2p^j] - [2p^{i-1}, 2p^j, 2p^{i-1}(p-1)] + [2p^j,2 p^{i-1}, 2p^{i-1}(p-1)]$. In $\An$, we get
	\begin{align*} \delta (x_i)  & = \binom{p^{i}}{p^{i-1}} [2p^{i}] = \binom{p^i}{p^{i-1}}y_i \\
	\delta (x_i y_j)  & =  \binom{p^{i}}{p^{i-1}} ([2p^i, 2p^{j}]- [2p^{j}, 2p^i]) = \binom{p^i}{p^{i-1}} y_i y_j \end{align*}
	Hence we can conclude \begin{align*}  \tilde{\beta}(x_i) = \frac{1}{p} \binom{p^{i}}{p^{i-1}} y_i & & \tilde{\beta}(x_i y_j)= \frac{1}{p} \binom{p^{i}}{p^{i-1}} y_i y_j  . \end{align*} 
	The coefficient
		\[ \frac{1}{p} \binom{p^{i}}{p^{i-1}} = \binom{p^{i}- 1}{p^{i-1}-1}\] 
		is an integer congruent to 1 mod $p$ by Lucas' theorem \cite{lucas}
\end{example}

By a similar, a bit tedious computation we get:
\begin{lem} \label{derivation}
The differential $\delta$ on $A_n^\bullet$ operates as follows:
	\[ \delta ( x_1^{a_1} \cdots x_k^{a_k} y_0^{b_1} \dots y_l^{b_l}) = \sum_i \binom{p^i}{p^{i-1}}  x_1^{a_1} \cdots x_i^{a_i-1} \cdots x_k^{a_k} y_i y_0^{b_0} \cdots y_l^{b_l} \]
Hence the Bocksteins are given by 
	\[ \tilde{ \beta } ( x_1^{a_1} \cdots x_k^{a_k} y_0^{b_1} \dots y_l^{b_l}) = \frac{1}{p} \sum_i \binom{p^i}{p^{i-1}}  x_1^{a_1} \cdots x_i^{a_i-1} \cdots x_k^{a_k} y_i y_0^{b_0} \cdots y_l^{b_l}\]
and 
\[  \beta  ( x_1^{a_1} \cdots x_k^{a_k} y_0^{b_1} \dots y_l^{b_l}) = \sum_i  x_1^{a_1} \cdots x_i^{a_i-1} \cdots x_k^{a_k} y_i y_0^{b_0} \cdots y_l^{b_l}.\]

\end{lem}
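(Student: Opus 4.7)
The plan is to reduce the lemma to (i) computing $\delta$ on single generators, and (ii) establishing a graded Leibniz rule for $\delta$ on products of generators; the Bockstein formulas then fall out of the long exact sequences.

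For (i), note that $y_i = \Ins_{m} \Perm[2p^i]$ has only one non-unital entry, so $\delta y_i = 0$ since every application of $P$ in the boundary formula either reuses this single entry with an inserted $1$ (an odd-odd pair giving $P=0$) or acts on inserted $1$'s themselves. That $\delta x_i = \binom{p^i}{p^{i-1}} y_i$ is precisely the computation carried out in the example immediately preceding the lemma; the coefficient $\binom{p^i}{p^{i-1}}/p = \binom{p^i-1}{p^{i-1}-1}$ is $\equiv 1 \bmod p$ by Lucas, which will give the clean formula $\beta x_i = y_i$.

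For (ii), the product in the algebra is realized at the chain level by juxtaposing the underlying compositions and then applying $\Perm$ (together with $\Ins$ to fill up to size $n$). Applied to such a product $\alpha\cdot\beta$, the differential $\delta$ produces three kinds of contributions: boundary terms internal to $\alpha$, boundary terms internal to $\beta$, and "crossing" terms where $P$ glues the last entry of (a shuffle of) $\alpha$ to the first entry of $\beta$. One then argues that the crossing terms cancel modulo $p$: by the Remark before the example, the coefficients $\binom{p^a+p^b}{p^a}$ and $\binom{p^a + p^b(p-1)}{p^a}$ appearing when two entries of the form $2p^a$, $2p^a(p-1)$ or $2p^b$ are merged satisfy exactly the Lucas-type identities making the alternating $\Perm$-sum of crossing compositions a boundary (indeed zero mod $p$ in the relevant entries). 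This yields $\delta(\alpha\cdot\beta) = \delta(\alpha)\cdot\beta + (-1)^{|\alpha|}\alpha\cdot\delta(\beta)$ at the chain level. Iterating, $\delta$ acts on any monomial as a derivation generated by $\delta(x_i) = \binom{p^i}{p^{i-1}} y_i$ and $\delta(y_j) = 0$, which is exactly the claimed formula.

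Given (i) and (ii), the Bockstein formulas follow formally: $\tilde\beta$ is computed by lifting a mod-$p$ cocycle to an integral cochain and dividing its integral $\delta$ by $p$, which on our integral lifts of the $x$-$y$-monomials gives $\frac{1}{p}\delta$; mod $p$ reduction then yields the formula for $\beta$ after using $\binom{p^i-1}{p^{i-1}-1}\equiv 1 \bmod p$. The main obstacle is step (ii): carefully tracking signs in $\Perm$ under shuffling and verifying that the mod-$p$ cancellation of the crossing terms holds in full generality, rather than just for two-factor products. This is essentially a bookkeeping exercise in the combinatorics of shuffle products, but it is where the specific number-theoretic shape of the generators (entries $2p^a$, $2p^a(p-1)$) is indispensable.
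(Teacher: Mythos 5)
Your step (i) and the formal passage from the $\delta$-formula to the two Bockstein formulas are fine, but step (ii) contains a genuine gap: the graded Leibniz rule you assert for the concatenate-and-$\Perm$ product is false for the chains the lemma is actually about, and iterating it produces the wrong coefficients. The monomials $x_1^{a_1}\cdots x_k^{a_k}y_0^{b_0}\cdots y_l^{b_l}$ are \emph{divided powers}: if $x_i^{a_i}$ were the $a_i$-fold product of $x_i$ and $\delta$ were a derivation for that product, you would obtain $\delta(x_i^{a_i})=a_i\binom{p^i}{p^{i-1}}x_i^{a_i-1}y_i$, whereas the lemma asserts the coefficient $\binom{p^i}{p^{i-1}}$ with no factor of $a_i$. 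A concrete check for $p=3$: $x_1^2=[2,4,2,4]$ (here $\Perm$ admits only the identity permutation, since every pair of entries is either equal or has $P\equiv 0 \bmod 3$), and
\[ \delta[2,4,2,4] \;=\; 3\bigl([6,2,4]-[2,6,4]+[2,4,6]\bigr)\;=\;\binom{3}{1}\,x_1y_1,\]
not $2\binom{3}{1}x_1y_1$. The reason your cancellation argument breaks is that the crossing terms do \emph{not} all cancel: merging the entry $2p^{i-1}(p-1)$ of one $x_i$-factor with the entry $2p^{i-1}$ of the next has no cancelling partner, because $P(2p^{i-1}(p-1),2p^{i-1})\equiv 0\bmod p$ means $\Perm$ never produces the transposed arrangement; this surviving crossing term (the $-3[2,6,4]$ above) is precisely what is needed to assemble the result into a multiple of the basis cycle $x_1y_1$. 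Relatedly, the ``internal'' terms of $\delta(\alpha\cdot\beta)$ are not literally $\delta(\alpha)\cdot\beta$, since the latter re-applies $\Perm$ and therefore contains shuffles absent from the former. (Also, where crossing terms do cancel, they cancel integrally by the symmetry $P(x,y)=P(y,x)$, not merely mod $p$.)

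The paper itself proves the lemma by the direct computation illustrated in the example immediately preceding it: expand $\delta$ on the explicit signed sum of compositions, cancel in pairs the merges of entries that $\Perm$ genuinely shuffles, and collect the surviving merges of the two halves $2p^{i-1},\,2p^{i-1}(p-1)$ of each $x_i$. If you want a structural Leibniz-rule argument, you must pass to the divided power product $\star$ of the paper's subsequent Remark, for which $x_i^{j_1}\star x_i^{j_2}=\binom{j_1+j_2}{j_1}x_i^{j_1+j_2}$; the differential is a derivation for $\star$, and unwinding these binomial factors on the divided-power basis recovers the stated formula. As written, your iteration step proves a different (and incorrect) statement.
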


As $\beta^2 = 0$, we can look at the \emph{Bockstein cohomology groups}
\[ BH^*(\An, \FP) = \Ker \beta / \im \beta. \]

\begin{lem} \cite[Cor. 3E.4]{hatcherbook} The group $H^*(\An, \ZZ)$ contains no element of order $p^2$ if  and only if 
	\[ \dim_{\FP} BH^r(\An, \FP) =  \rk H^r(\An, \ZZ). \] In this case the map 
	\[H^*(\An, \ZZ) \to H^*(\An, \FP) \] is injective on the $p$-torsion and its image is $\im \beta$.
\end{lem}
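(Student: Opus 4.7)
The plan is to decompose $H^r(\An, \ZZ) = \ZZ^{b_r} \oplus T^r$ into its free and torsion parts, writing the $p$-primary summand of the torsion as $T^r_{(p)} = \bigoplus_{k \ge 1} (\ZZ_{p^k})^{n^r_k}$. Since $\An$ is a complex of free $\ZZ$-modules, the universal coefficient theorem yields the split short exact sequence
\[ 0 \ra H^r(\An,\ZZ)/p \ra H^r(\An,\FP) \ra H^{r+1}(\An,\ZZ)[p] \ra 0, \]
so that $\dim H^r(\An,\FP) = b_r + \sum_{k \ge 1} n^r_k + \sum_{k \ge 1} n^{r+1}_k$. The strategy is to compute $\dim BH^r$ in these terms and compare with $b_r$.

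To evaluate $\beta$ I use the factorisation $\beta = (\text{mod } p) \circ \tilde\beta$ exhibited in the diagram above. From the first Bockstein long exact sequence, $\im\tilde\beta \subset H^{r+1}(\An,\ZZ)$ equals the $p$-torsion $T^{r+1}[p]$. The reduction $H^{r+1}(\An,\ZZ) \to H^{r+1}(\An,\FP)$ annihilates anything $p$-divisible; and a generator of a $\ZZ_{p^k}$-summand is $p$-divisible in $H^{r+1}(\An,\ZZ)$ iff $k \ge 2$, so only the shadows of $\ZZ_p$-summands survive. Hence $\dim \im(\beta \colon H^r(\FP) \to H^{r+1}(\FP)) = n^{r+1}_1$, and shifting the index by one, $\dim \im(\beta \colon H^{r-1}(\FP) \to H^r(\FP)) = n^r_1$. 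Subtracting images from kernels in the UCT formula above gives
\[ \dim BH^r(\An,\FP) \; = \; b_r + \sum_{k \ge 2}\bigl(n^r_k + n^{r+1}_k\bigr). \]

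This proves the first claim: $\dim BH^r = b_r = \rk H^r(\An,\ZZ)$ holds for all $r$ simultaneously if and only if $n^r_k = 0$ for every $r$ and every $k \ge 2$, i.e.\ $H^*(\An,\ZZ)$ has no element of order $p^2$. For the second claim, factor the reduction as $H^r(\An,\ZZ) \twoheadrightarrow H^r(\An,\ZZ)/p \hookrightarrow H^r(\An,\FP)$; its kernel on the $p$-torsion $T^r[p]$ is $T^r[p] \cap p H^r(\An,\ZZ)$, which vanishes precisely when no $p$-torsion element is $p$-divisible, i.e.\ when there is no $p^2$-torsion. In that case the image of $T^r[p]$ inside $H^r(\An,\FP)$ is the mod-$p$ reduction of $\im\tilde\beta = T^r[p]$, which is $\im\beta$ by construction.

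The only real obstacle is bookkeeping: one must keep track of which classes in $H^r(\An,\FP)$ come from the $\otimes\FP$-part of $H^r(\An,\ZZ)$ and which from the torsion part of $H^{r+1}(\An,\ZZ)$, and then observe that $\beta$ pairs off precisely the two shadows of each $\ZZ_p$-summand while the two shadows of a $\ZZ_{p^k}$-summand with $k \ge 2$ both survive in $BH^*$. No spectral sequence machinery is required; the two Bockstein long exact sequences in the excerpt suffice.
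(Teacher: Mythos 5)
Your proposal is correct. Note, however, that the paper does not prove this lemma at all: it is quoted verbatim from Hatcher (Cor.\ 3E.4), so there is no internal proof to compare against. What you have written is a self-contained verification that is essentially the standard argument (and close to Hatcher's own, which likewise reduces to the structure of finitely generated abelian groups rather than invoking the full Bockstein spectral sequence): the universal coefficient theorem gives $\dim H^r(\An,\FP)=b_r+\sum_k n^r_k+\sum_k n^{r+1}_k$, the factorisation $\beta=(\mathrm{mod}\ p)\circ\tilde\beta$ together with $\im\tilde\beta=H^{r+1}(\An,\ZZ)[p]$ and $\Ker(\mathrm{mod}\ p)=pH^{r+1}(\An,\ZZ)$ gives $\dim\im\beta=n^{r+1}_1$, and the bookkeeping yields $\dim BH^r=b_r+\sum_{k\ge2}(n^r_k+n^{r+1}_k)$, from which both claims follow. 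The one place where your wording should be tightened is the divisibility step: the relevant elements of $\im\tilde\beta$ are not the generators $g$ of the $\ZZ_{p^k}$-summands but the order-$p$ elements $p^{k-1}g$; it is these that lie in $pH^{r+1}(\An,\ZZ)$ exactly when $k\ge2$ (a generator $g$ itself is never $p$-divisible). With that reading your count $\dim\im\beta=n^{r+1}_1$ and the identification $\Ker(\rho|_{T^r[p]})=T^r[p]\cap pH^r(\An,\ZZ)$ are both right, and the argument goes through. Your proof buys the paper nothing it does not already get from the citation, but it is a legitimate elementary substitute for the reference.
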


Vainshtein stated that $H^*(\An, \ZZ)$ has no elements of order $p^2$:

\begin{thm} \cite{Vain} \label{mainvain}
	The integral cohomology is given by
	\begin{align*} H^0(C_n(\CC), \ZZ) = \ZZ & & H^1(C_n(\CC), \ZZ) = \ZZ \text{ if } n \ge 2 \end{align*} 
		and
	\[ H^r(C_n(\CC), \ZZ) = \bigoplus_p \tilde{\beta}_p H^{r-1}(C_n(\CC), \FP) \text{ for } r \ge 2.\]
\end{thm}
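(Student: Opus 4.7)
The plan is to reduce Theorem~\ref{mainvain} to the claim that $BH^r(A_n^\bullet, \FP) = 0$ for all $r \ge 2$ and all primes $p$, and then to prove this vanishing by a Koszul/Künneth computation on the explicit basis already constructed.

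For $r = 0, 1$ the assertion is Arnold's classical calculation, already recorded in the introduction. For $r \ge 2$, Arnold's finiteness gives $\rk_{\ZZ} H^r(C_n(\CC), \ZZ) = 0$, so $H^r(C_n(\CC), \ZZ)$ is a finite abelian group and it suffices to identify its $p$-primary component for each prime $p$. From the Bockstein long exact sequence, $\tilde\beta_p H^{r-1}(A_n^\bullet, \FP)$ is always exactly the $p$-torsion subgroup of $H^r(A_n^\bullet, \ZZ)$; and by the Hatcher lemma cited just above, the $p$-primary part of $H^r$ equals this $p$-torsion precisely when $\dim_{\FP} BH^r(A_n^\bullet, \FP) = \rk_{\ZZ} H^r(A_n^\bullet, \ZZ) = 0$. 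Thus the whole theorem reduces to Bockstein-cohomology vanishing in degrees $r \ge 2$.

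For this vanishing I use Lemma~\ref{derivation}: on a basis monomial $m = x_1^{a_1} \cdots x_k^{a_k} y_0^{b_0} y_1^{b_1} \cdots y_l^{b_l}$ the Bockstein sends $m \mapsto \sum_i \partial_i m$, where $\partial_i$ replaces one $x_i$ by $y_i$ (and vanishes if $a_i = 0$ or $b_i = 1$). Writing $A_i = a_i + b_i$ for $i \ge 1$, two elementary observations are decisive: (i) $\beta$ preserves the size of $m$, because $x_i$ and $y_i$ each have size $2p^i$; and (ii) $\beta$ preserves the shape $\bigl((A_i)_{i \ge 1}, b_0\bigr)$ of $m$, because it only trades $x_i$ for $y_i$ at the same index. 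Hence $A_n^\bullet \otimes \FP$ splits as a direct sum of subcomplexes indexed by shape, and each shape-component is a tensor product (over $i \ge 1$ with $A_i \ge 1$) of two-dimensional complexes $C_i$ spanned by $\{x_i^{A_i}, \, x_i^{A_i - 1} y_i\}$ with differential $x_i^{A_i} \mapsto x_i^{A_i - 1} y_i$ and $x_i^{A_i - 1} y_i \mapsto 0$. Each such $C_i$ is acyclic, so Künneth over $\FP$ forces any shape with at least one nonzero $A_i$ to contribute $0$ to $BH^*$. The remaining "empty" shapes contribute only $1$ and $y_0$, which live in degrees $0$ and $1$. Consequently $BH^r(A_n^\bullet, \FP) = 0$ for $r \ge 2$, as required.

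The main obstacle is the bookkeeping in the Künneth step. The combinatorial basis $\Perm[\,\ldots\,]$ arranges the entries in a fixed convention, whereas the tensor decomposition groups the $x_i$- and $y_i$-factors at each index $i$ together. One must check that, up to the signs produced by reordering the odd generators $y_j$, the differential $\beta$ of Lemma~\ref{derivation} genuinely splits as $\sum_i \partial_i$ with $\partial_i$ acting only on the $i$-th tensor factor, so that Künneth applies to each shape-component; the unit coefficient (rather than the naive $a_i$) on the right-hand side of the formula is precisely what makes each $C_i$ look like a divided-power Koszul piece. Once the signs are tracked, acyclicity of each $C_i$ for $A_i \ge 1$ is immediate and, combined with the reduction of the second paragraph, yields the theorem.
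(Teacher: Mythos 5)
Your proposal is correct and follows essentially the same route as the paper: both reduce the theorem via the Bockstein criterion (the lemma quoted from Hatcher) to showing $BH^r(A_n^\bullet,\FP)=0$ for $r\ge 2$, and both deduce this from Lemma \ref{derivation} by exploiting that $\beta$ is a Koszul-type differential pairing each $x_i$ with $y_i$, leaving only $1$ and $y_0$ in Bockstein cohomology. Your size-and-shape decomposition plus K\"unneth argument is a more systematic (and arguably more complete) packaging of the paper's one-variable contraction $x = x_j^k f + x_j^{k-1} y_j g \mapsto x_j^k g$, but the underlying idea is identical.
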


\begin{proof} Take any $x \in \Ker \beta$ of the form
	\[ x = x_j^k f + x_j^{k-1} y_j g\] for $k \ge 0$, $j>0$ 
	where $f,g$ do not contain $x_j$ or $y_j$. We compute
	\[ \beta(x) = x_j^{k-1} y_j f + x_j^k \beta(f) + x_j^{k-1} y_j \beta(g) .\]
	Hence we see $ \beta(g) = f$ and $\beta( x_j^k g) = x$. So we have shown that
	\[ \Ker \beta / \im \beta = \ZZ_p \otimes \ZZ_p y_0. \qedhere \]
\end{proof}

\begin{rmk} The map $\beta$ looks suspiciously like a derivation. We will first work with integer coefficients. We consider the free graded commutative $\ZZ$-algebra
\begin{align*} \Gamma = \myring & & \deg (x_i) = 2p^i-2 & & \deg(y_i) = 2p^i-1.\end{align*}
with the map
		\[ \beta ( x_1^{a_1} \cdots x_k^{a_k} y_0^{b_1} \dots y_l^{b_l}) = \sum_i x_1^{a_1} \cdots x_i^{a_i-1} \cdots x_k^{a_k} y_i y_0^{b_0} \cdots y_l^{b_l}.\]
Take a copy
	\begin{align*} \Gamma' =  \Lambda \langle X_1, X_2 \mydots Y_0, Y_1 \lmydots \rangle & & \deg (X_i) = 2p^i-2 & & \deg(Y_i) = 2p^i-1.\end{align*}
of $\Gamma$. We can embedded the abelian group $\Gamma$ into $\Gamma' \otimes \QQ$ via 
	\[ \Gamma \xhookrightarrow{} \Gamma' \otimes \QQ, \quad x_1^{a_1} \cdots x_k^{a_k} y_0^{b_1} \dots y_l^{b_l} \mapsto \frac{1}{a_1!} X_1^{a_1} \cdots  \frac{1}{a_k!} X_k^{a_k} Y_0^{b_1} \dots Y_l^{b_l}. \] Write $\star$ for the multiplication on $\Gamma'$. Then \[ x_i^{j_1} \star x_i^{j_2} = \binom{j_1+j_2}{j_1} x_i^{j_1+j_2}\] and $\star$ induces a multiplication on $\Gamma$ (a so called divided power algebra \cite[Ex 3.5C]{hatcherbook}). The advantage of $\star$ is that the map $\beta = \beta'_{| \Gamma}$ comes from the unique derivation $\beta'$ on $\Gamma' \otimes \QQ$ defined by  
\begin{align*} \beta'( X_i) = Y_i & & \beta'(Y_i) = 0 \end{align*}
	and the rule (compare \cite[Chap. 3]{rationalhomotopy})
	\[ \beta'( z_1 \star z_2 ) = \beta'(z_1) \star z_2 + (-1)^{\deg z_1} z_1 \star \beta'(z_2). \]
The Bockstein morphism for $\An$ is now the reduction mod $p$ of $\beta$.	
%
%
\end{rmk}

\begin{cor}
We have an isomorphism
	\[ p \text{-Torsion of } H^{r+1}(C_\infty(\CC), \ZZ) \simeq \text{ degree $r$-part of } \Lambda \langle x_1, x_2, \dots, y_1, y_2, \dots \rangle \otimes \FP.\] for $r > 0$.
\end{cor}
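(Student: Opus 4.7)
The plan is to reduce the claim to a dimension count in the Bockstein complex. Since $H^*(C_\infty(\CC), \ZZ)$ has no elements of order $p^2$ by Theorem \ref{mainvain}, the Lemma preceding Theorem \ref{mainvain} identifies the $p$-torsion of $H^{r+1}(C_\infty(\CC), \ZZ)$ with $(\im\beta)^{r+1} \subseteq H^{r+1}(C_\infty(\CC), \FP)$. In particular, this $p$-torsion is already an $\FP$-vector space, so writing $A = \Lambda\langle x_1, x_2, \dots, y_1, y_2, \dots\rangle \otimes \FP$, the corollary reduces to proving
\[ \dim_{\FP}(\im\beta)^{r+1} = \dim_{\FP} A^r \qquad (r > 0). \]

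Since $\deg y_0 = 1$ and $\beta(y_0) = 0$, I would first split off the $y_0$ factor by writing $H^*(C_\infty(\CC), \FP) = A \oplus y_0 A$ as chain complexes for $\beta$, which immediately yields
\[ (\im\beta)^{r+1} = (\im\beta|_A)^{r+1} \oplus y_0 \cdot (\im\beta|_A)^r. \]
The next step is to verify that $A$ itself is Bockstein-acyclic in positive degrees, i.e., $(\Ker \beta|_A)^r = (\im\beta|_A)^r$ for all $r \geq 1$. This follows by running the inductive argument from the proof of Theorem \ref{mainvain} inside $A$: that argument peels off a pair $(x_j, y_j)$ with $j > 0$ one at a time and never introduces $y_0$, so restricted to $A$ it yields $BH^*(A) = \FP$ concentrated in degree $0$.

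The conclusion then follows from the rank--nullity identity for $\beta|_A \colon A^r \to A^{r+1}$ with $r \geq 1$:
\[ \dim_{\FP} A^r = \dim_{\FP}(\Ker\beta|_A)^r + \dim_{\FP}(\im\beta|_A)^{r+1} = \dim_{\FP}(\im\beta|_A)^r + \dim_{\FP}(\im\beta|_A)^{r+1}, \]
which combined with the splitting above gives $\dim_{\FP}(\im\beta)^{r+1} = \dim_{\FP} A^r$ for $r > 0$. Both sides of the corollary are therefore finite-dimensional $\FP$-vector spaces of the same dimension, and hence isomorphic.

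The main (and essentially only) obstacle is confirming that the elimination step in the proof of Theorem \ref{mainvain} transports unchanged to the subalgebra $A$. This is tautological: given a $\beta$-cocycle in $A$ containing a variable $x_j$ or $y_j$ with $j>0$, the coboundary witness $x_j^k g$ produced there is built from variables already present in the input, so if the input lies in $A$ so does the witness. Note also that the restriction $r > 0$ is essential, since at $r = 0$ the degree $0$ part of $A$ contributes the unit class which does not correspond to any $p$-torsion in $H^1(C_\infty(\CC), \ZZ) = \ZZ$.
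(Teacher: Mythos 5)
Your proposal is correct and follows essentially the same route as the paper: both split $H^*(C_\infty(\CC),\FP)=R\oplus y_0R$ with $R=\Lambda\langle x_1,\dots,y_1,\dots\rangle\otimes\FP$, use $\beta(y_0z)=y_0\beta(z)$ to get $\im\beta=\beta(R)\oplus y_0\beta(R)$, and feed in the Bockstein-acyclicity of $R$ in positive degrees established in the proof of Theorem \ref{mainvain}. The only cosmetic difference is that the paper packages the final step as an explicit bijection $(z_1,z_2)\mapsto\beta(z_2)+y_0z_1$ on $\beta(R)\oplus R'$, whereas you obtain the same equality of dimensions by rank--nullity, which is equivalent since all graded pieces are finite-dimensional.
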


\begin{proof}   Let $R= \Lambda \langle x_1, x_2, \dots, y_1, y_2, \dots \rangle \otimes \FP$. 
	Theorem \ref{basisfp} shows that 
	\[ H^* ( C_\infty(\CC), \FP) = R \oplus y_0 R.\]
	By lemma \ref{derivation} we know that $\beta(x y_0) = \beta(x) y_0$ and $\beta(R) \subset R$. This shows
	\[ \im \beta = \beta(R) \oplus y_0 \beta(R).\]
	Decompose $ R = \beta(R) \oplus R'$. As $\Ker \beta = \im \beta \oplus \FP \oplus \FP y_0$, the map
	\[ \beta(R) \oplus R' \to \beta(R) \oplus y_0 \beta(R) = \im \beta , \, (z_1, z_2) \mapsto \beta(z_2)+ y_0 z_1 \] is a bijective map between the degree $r$ part of $R$ and the degree $r+1$ part of $\im \beta$ for $r >0$ . However, it does not respect the size, so the isomorphism is only possible for $n \to \infty$.
\end{proof}

\begin{rmk}
	The description of dimension of the $p$-torsion of $H^r(C_n(\CC), \ZZ)$ in \cite[Appendix to III]{cohen} seems to be wrong.
\end{rmk}

\begin{example}
	In table \ref{torsion}, we compute $H^*(C_{24}(\CC), \ZZ_3)_{(3)}$ by applying theorem \ref{mainvain} and the formula \ref{derivation} to our example \ref{twentyfour}.

\begin{table}[h]
	\caption{ The 3-torsion in the cohomology group $H^*(C_{24}(\CC), \ZZ)$}
	\label{torsion}
	\begin{center}
		\begin{tabular}{ c | p{13cm}}
		$r$ & basis of $H^r(C_{24}(\CC), \ZZ)_{(3)}$ as $\ZZ_3$-module \\
		\toprule
		0 & -- \\
		\midrule
		1 & -- \\
		\midrule
		2 & --\\
		\midrule
		3 & --\\
		\midrule
		4 & --  \\
		\midrule
		5 & $y_1 = [6]$  \\
		\midrule
		6 & $y_0 y_1 = [2,6] -[6,2]$ \\
		\midrule
		7 & -- \\
		\midrule
		8 & \\
		\midrule
		9 & $x_1 y_1 = [2,4, 6] - [2,6,4] + [6,2,4]$ \\
		\midrule
		10 & $x_1 y_0 y_1 = [2,4, 2,6] - [2,4,6,2] + [2,6,4,2] - [6,2,4,2] $ \\
		\midrule
		11 & --\\
		\midrule
		12 & --\\
		\midrule
		13 & $x_1^2 y_1 = [2,4,2,4,6] - [2,4,2,6,4] + [2,4,6,2,4] - [2,6,4,2,4] + [6,2,4,2,4]$ \\
		\midrule
		14 & $x_1^2 y_0 y_1 =	[2,4,2,4,2, 6] - [2,4,2,4,6,2] + [2,4,2,6,4,2] - [2,4,6,2,4,2,] + \dots$ \\ 
		\midrule
		15 & --\\
		\midrule
		16 & -- \\
		\midrule
		17 & $y_2 = [18]$ \newline $x_1^3 y_1 = [2,4,2,4,2,4,6] - \dots$ \\
		\midrule
		18 & $y_0 y_2 = [2, 18] - [18,2]$ \\
		\midrule
		19 & -- \\
		\midrule
		20 & -- \\
		\midrule
			21 & $28 x_1 y_2 + x_2 y_1 = 28([2,4, 18] - [2,18,4] + [18, 2,4]) + [6,12, 6]$ \\
		\midrule
		22 & $y_1 y_2 = [6,18] -[18,6]$ \\
		\midrule
		$\ge 23$ & -- \\
		\end{tabular}
	\end{center}
\end{table}
\end{example}

\newpage
\section{Configuration Spaces of the Sphere}

We will describe a cellular decompostion of $\overline{C_n(S^2)}$ by Napolitano \cite{napolitano} and show how it can be used to compute the cohomology of $C_n(S^2)$.

\subsection{Cellular Decomposition of $\overline{C_n(S^2)}$}
Using $S^2 = \BR^2 \sqcup \infty$, the cellular decomposition of $\overline{C_n(\mathbb{C})}$ can be extended to a cellular decomposition of $\overline{C_n(S^2)}$ by looking at configurations that do or do not contain $\infty$. The resulting complex $B_n^\bullet = (B_n^r)$ with $H^*(B_n^\bullet, \ZZ) = H^*(C_n(S^2), \ZZ)$ has chains
\[ B_n^{r} = A_n^r \oplus A_{n-1}^{r-2} =  \ZZ \Part(n,n-r) \oplus \ZZ \Part(n-1, n-r+1).\] The new boundary maps $\Delta$ were computed by Napolitano \cite{napolitano}: We define a new operator $D \colon A_n^r \to A_{n-1}^{r-1}$ by 
\[ \D [n_1, \dots, n_s] = \sum_{i=1}^s Q(n_i) (-1)^{\sum_{j=1}^{i-1} n_i} [n_1, \dots, n_{i-1}, n_i -1, n_{i+1}, \dots, n_s]\] where
\[ Q(n_i ) = \begin{cases} 
	0 & \text{if } n_i \equiv 1 \mod 2 \\
	2 & \text{ otherwise}.
\end{cases}
\]
The differential $\Delta$ of the complex $B_n^\bullet$ is then given by \[ \Delta \colon B_n^{r} \to B_n^{r+1}, (a,b) \mapsto (\delta(a), \delta(b)+(-1)^{n-r} \D(a) ). \]

\begin{cor} We have $D \equiv 0 \mod 2$ and  therefore $B_n^\bullet \otimes \ZZ_2 =  (A_n^\bullet  \oplus A_{n-1}^\bullet) \otimes \ZZ_2$ and \[ H^r(C_n(S^2), \ZZ_2) = H^r(C_n(\CC), \ZZ_2) \oplus H^{r-2} (C_{n-1}(\CC), \ZZ_2).\]
\end{cor}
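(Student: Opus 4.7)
The proof is essentially a direct verification that unpacks the definition of $D$ modulo $2$. The key observation is that every coefficient appearing in $D[n_1,\dots,n_s]$ carries the factor $Q(n_i)$, which is either $0$ or $2$; in particular $Q(n_i)\equiv 0\pmod 2$ for every $n_i$. Hence $D \equiv 0 \pmod 2$ as a map of chain groups, and consequently the component $(-1)^{n-r}D(a)$ in the formula for $\Delta$ disappears after tensoring with $\ZZ_2$.

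Once the $D$-term is killed, the differential $\Delta\colon B_n^r\otimes\ZZ_2\to B_n^{r+1}\otimes\ZZ_2$ reduces to $(a,b)\mapsto(\delta a,\delta b)$, i.e.\ it is the direct sum of the differentials on $A_n^\bullet\otimes\ZZ_2$ and on $A_{n-1}^\bullet\otimes\ZZ_2$. Therefore the complex splits as
\[ B_n^\bullet\otimes\ZZ_2 \;=\; (A_n^\bullet\otimes\ZZ_2)\;\oplus\;(A_{n-1}^\bullet\otimes\ZZ_2)[-2], \]
where the shift $[-2]$ records that a class in $A_{n-1}^{r-2}$ sits in degree $r$ of $B_n^\bullet$ according to the definition $B_n^r=A_n^r\oplus A_{n-1}^{r-2}$.

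Taking cohomology of a direct sum of complexes commutes with the direct sum, so
\[ H^r(B_n^\bullet\otimes\ZZ_2)\;=\;H^r(A_n^\bullet\otimes\ZZ_2)\;\oplus\;H^{r-2}(A_{n-1}^\bullet\otimes\ZZ_2). \]
Combined with $H^*(B_n^\bullet,\ZZ)=H^*(C_n(S^2),\ZZ)$ and the analogous identification $H^*(A_n^\bullet,\ZZ)=H^*(C_n(\CC),\ZZ)$ from the previous section, together with the universal coefficient theorem (which identifies $H^*(-\otimes\ZZ_2)$ with $H^*(-,\ZZ_2)$ for these free $\ZZ$-complexes), this gives the displayed decomposition. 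There is really no main obstacle; the only thing to double-check is the sign/degree bookkeeping in the identification $B_n^r=A_n^r\oplus A_{n-1}^{r-2}$, which is what forces the index shift $r\mapsto r-2$ on the second summand.
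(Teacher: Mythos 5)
Your proposal is correct and matches the argument the paper intends (the corollary is stated without proof precisely because $Q(n_i)\in\{0,2\}$ makes $D\equiv 0 \bmod 2$ immediate, after which the mapping-cone differential becomes block-diagonal and the complex splits with the degree shift you describe). Nothing further is needed.
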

\begin{figure}
\begin{tikzpicture}
  \matrix (m) [matrix of math nodes,row sep=3em,column sep=4em,minimum width=2em]
  {
	  \cdots &  A_n^{r-1} & A_n^r & A_n^{r+1} & \cdots \\
	\cdots & A_{n-1}^{r-3} & A_{n-1}^{r-2} & A_{n-1}^{r-1} & \cdots \\};
  \path[-stealth]
	(m-1-1) edge node [above] {$\delta$} (m-1-2)
	(m-2-1) edge node [above] {$\delta$} (m-2-2)
    	(m-1-2) edge node [above] {$\delta$} (m-1-3)
	(m-1-3) edge node [above] {$\delta$} (m-1-4)
	(m-2-2) edge node [above] {$\delta$} (m-2-3)
	(m-2-3) edge node [above] {$\delta$} (m-2-4)
	(m-1-4) edge node [above] {$\delta$} (m-1-5)
	(m-2-4) edge node [above] {$\delta$} (m-2-5)
	(m-1-2) edge node [right] {$S$} (m-2-2)
	(m-1-3) edge node [right] {$S$} (m-2-3)
	(m-1-4) edge node [right] {$S$} (m-2-4)
	(m-1-2) edge node [above] {\hspace{0.5em} $D$} (m-2-3)
	(m-1-3) edge node [above] {\hspace{0.5em} $D$} (m-2-4)
	;

\end{tikzpicture}
\end{figure}

\subsection{Mapping Cone Complex} 

The relation \[D \circ \delta = \delta \circ D\] is equivalent to  $\Delta^2 =0$. This means we can see $D$ as a chain map 
\[ D \colon A_n^\bullet \to A_{n-1}^\bullet [1] \] and the complex $B_n^\bullet$ can be interpreted as the mapping cone complex of the chain map $D$. The short exact sequence of chain complexes 
\[ 0 \to A_{n-1}^\bullet [2] \to B_n^\bullet \to A_n^\bullet \to 0.\]
given by $a_2 \mapsto (0, a_2)$ and $(a_1, a_2) \mapsto a_1$ induces a long exact sequence
\[ \dots \to H^{r-1} (A_n^\bullet) \rightarrow H^r(A_{n-1}^\bullet[2]) \rightarrow H^r(B_n^\bullet) \rightarrow H^r(A_n^\bullet) \rightarrow H^{r+1} (A_{n-1}^\bullet[2]) \rightarrow \dots. \]
The connecting homomorphism can be identified with $D^*$.
\begin{lem} \label{longexact} We get a long exact sequence
	\[ \dots \to H^{r-1} (A_n^\bullet) \xrightarrow{ D^*}  H^{r-2}(A_{n-1}^\bullet) \rightarrow H^r(B_n^\bullet) \rightarrow H^r(A_n^\bullet) \xrightarrow{D^*} H^{r-1} (A_{n-1}^\bullet) \rightarrow \dots \]
\end{lem}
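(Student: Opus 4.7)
The plan is to apply the standard snake-lemma long exact sequence in cohomology to the short exact sequence of cochain complexes
\[ 0 \to A_{n-1}^\bullet[2] \to B_n^\bullet \to A_n^\bullet \to 0 \]
already written down in the excerpt, and then identify the connecting homomorphism with (a sign of) $D^*$.

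First I would verify the short exact sequence: the inclusion $a_2 \mapsto (0,a_2)$ is obviously injective in every degree, the projection $(a_1,a_2)\mapsto a_1$ is surjective, exactness in the middle is immediate from the direct-sum structure $B_n^r = A_n^r \oplus A_{n-1}^{r-2}$, and compatibility with the differentials follows directly from the formula $\Delta(a_1,a_2) = (\delta a_1, \delta a_2 + (-1)^{n-r} D a_1)$: the inclusion commutes with $\Delta$ because the component $a_1$ is zero, and the projection commutes because it simply reads off the first summand. With the convention $(A_{n-1}^\bullet[2])^r = A_{n-1}^{r-2}$ used in the displayed sequence, standard homological algebra then gives a long exact sequence
\[ \cdots \to H^{r-1}(A_n^\bullet) \xrightarrow{\partial} H^{r-2}(A_{n-1}^\bullet) \to H^r(B_n^\bullet) \to H^r(A_n^\bullet) \xrightarrow{\partial} H^{r-1}(A_{n-1}^\bullet) \to \cdots \]
after identifying $H^r(A_{n-1}^\bullet[2]) = H^{r-2}(A_{n-1}^\bullet)$.

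The only remaining task is to compute the connecting morphism $\partial$. Take a cocycle $a \in A_n^r$ with $\delta a = 0$ and lift it to $(a,0) \in B_n^r$ using the obvious set-theoretic splitting of the surjection. Applying the total differential yields
\[ \Delta(a,0) = \bigl(\delta a,\, (-1)^{n-r} D a\bigr) = \bigl(0,\, (-1)^{n-r} D a\bigr), \]
and this element lies in the image of $A_{n-1}^{r-1} \hookrightarrow B_n^{r+1}$ coming from $A_{n-1}^\bullet[2]$. Therefore the connecting map sends $[a] \in H^r(A_n^\bullet)$ to $[(-1)^{n-r} D a] \in H^{r-1}(A_{n-1}^\bullet)$, which equals $D^*[a]$ up to an overall sign depending only on the degree.

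The sign is harmless for the statement of the lemma: multiplication by $\pm 1$ is an automorphism of each cohomology group, so replacing the connecting map by $D^*$ preserves exactness of the sequence. There is no real obstacle here; the whole argument is a direct instance of the mapping cone long exact sequence, and the only subtlety is bookkeeping the shift convention and the $(-1)^{n-r}$ sign in $\Delta$.
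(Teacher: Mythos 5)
Your proposal is correct and is essentially the paper's own argument: the paper also realizes $B_n^\bullet$ as the mapping cone of $D\colon A_n^\bullet \to A_{n-1}^\bullet[1]$, uses the short exact sequence $0 \to A_{n-1}^\bullet[2] \to B_n^\bullet \to A_n^\bullet \to 0$, and identifies the connecting homomorphism with $D^*$. Your explicit lift $(a,0)$ and the remark that the sign $(-1)^{n-r}$ does not affect exactness are exactly the bookkeeping the paper leaves implicit.
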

We can use this long exact sequence to compare the cohomology of $B_n^\bullet$, $A_n^\bullet$ and $A_{n-1}^\bullet$. Next we will construct a map 
\[ S \colon A_n^r \to A_{n-1}^{r-2}, \] which is almost a chain homotopy  $D \approx 2 \delta S + 2 S \delta$  between $D$ and the zero map. This allows us to compute the rank of $D^*$.

\section{Construction of (almost) a Null Homotopy}

As a motivation we first look at the case $r = n-1$. We set $S[n]  = [1, n-2]$. Then we have 
\[ 2\delta S [n] = 2 \delta [1, n-2]  = 2 [n-1] = D [n] \] if $n$ is even and 
\[2 \delta S [n] = 2\delta [1, n-2]  = 0 = D [n]\] otherwise.

In general, we define $S \colon A_n^r \to A_{n-1}^{r-2}$ by
 \[ S[n_1 \mydots n_s] = \sum_{1 \le k \le i \le s} \! \! (-1)^{k+1+\sum_{m=1}^{k-1} n_m} [n_1 \mydots n_{k-1}, 1, n_k \mydots n_{i-1}, n_i-2, n_{i+1} \mydots n_s] .\] If $n_i -2 \le 0$, we simply omit this summand.

\begin{lem} \label{E1} For every composition $[n_1, \dots, n_s]$ with $n_s \neq 2$ we have 
	\[ (D-2 \delta \circ S -2 S \circ \delta)  [n_1, \dots, n_s] = 0 \]  and
	\[ (D-2 \delta \circ S - 2 S \circ \delta) [n_1 \mydots n_{s-1}, 2] = \! 2 \sum_{1 \le k \le s} \! \! (-1)^{s+ k+ \sum_{m=1}^{k-1} n_m } [n_1 \mydots n_{k-1}, 1, n_k \mydots n_{s-1}] \] otherwise.  
\end{lem}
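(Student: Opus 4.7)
The plan is a direct term-by-term expansion: compute each of $D[n_1,\dots,n_s]$, $2\delta\circ S[n_1,\dots,n_s]$ and $2 S\circ\delta[n_1,\dots,n_s]$ as signed sums of basis compositions in $A_{n-1}^{r-1}$, and match the monomials. A summand of $S[n_1,\dots,n_s]$ is indexed by a pair $1\le k\le i\le s$ with $n_i\ge 2$, and has the form $\pm[n_1,\dots,n_{k-1},1,n_k,\dots,n_{i-1},n_i-2,n_{i+1},\dots,n_s]$; the subsequent $\delta$ creates a merge. I classify the resulting monomials in $2\delta\circ S$ by the location of that merge relative to the insertion and the decrement, into four classes: (A) the inserted~$1$ merges with $n_i-2$ (only when $k=i$); (B) the inserted~$1$ merges with a different neighbor; (C) the entry $n_i-2$ merges with $n_{i-1}$ or $n_{i+1}$; (D) the merge is disjoint from both the insertion and the decrement. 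The expansion of $2S\circ\delta$ admits a symmetric classification in which $\delta$ merges first and $S$ then inserts and decrements.

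\medskip

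Two algebraic identities, immediate from the definition $P(x,y)=\binom{\lfloor x/2+y/2\rfloor}{\lfloor x/2\rfloor}$ (for $x,y$ not both odd), drive the cancellations:
\begin{align*}
P(1,m)=P(m,1) &= \tfrac12 Q(m),\\
P(x,y) &= P(x-2,y)+P(x,y-2)\quad\text{for }x,y\ge 2.
\end{align*}
The first identifies the class-(A) coefficient $P(1,n_i-2)$ with $Q(n_i)/2$, producing the $i$-th summand of~$D$. The second, Pascal's identity in disguise, matches the class-(C) coefficient $P(n_i-2,n_{i+1})$ from $\delta S$ with the coefficient $P(n_i,n_{i+1})$ of the corresponding merge in $S\delta$, modulo class-(B) corrections that themselves cancel via the first identity. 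After bookkeeping, all monomials of $2\delta S+2S\delta$ pair off except for a residue that reassembles to $D[n_1,\dots,n_s]$ when $n_s\ne 2$.

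\medskip

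The sign bookkeeping is the main obstacle. Inserting a $1$ before position $j$ shifts every merge index $l\ge j$ by one, flipping the $(-1)^{l-1}$ sign in $\delta$; this must be reconciled with the factors $(-1)^{k+1+\sum_{m<k}n_m}$ in $S$ and $(-1)^{\sum_{j<i}n_j}$ in $D$. I would split the verification into cases on the relative position of the insertion $k$, the decrement $i$, and the merge $l$ (roughly: $l<k$, $l\in\{k-1,k\}$, $k<l<i$, $l\in\{i-1,i\}$, $l>i$), checking that the signs match in each case. Indices $i<s$ with $n_i=1$ are harmless because $Q(1)=0$ and the $(i,i)$ diagonal of $S$ is absent; indices $i<s$ with $n_i=2$ produce omissions in $S$ that are compensated by symmetric omissions in $S\delta$.

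\medskip

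The residual case $n_s=2$ is special: the pairs $(k,s)$ in $S$ yield $[\ldots,0]$ tails that are discarded and cannot be compensated. Consequently the diagonal class-(A) contribution at $i=s$ is missing from $2\delta S$, so the $i=s$ summand $2(-1)^{\sum_{m<s}n_m}[n_1,\dots,n_{s-1},1]$ of $D$ is not cancelled---this gives the $k=s$ term of the claimed residual. Moreover, several $S\delta$ terms in which $\delta$ first merges $n_{s-1}$ with $n_s=2$ and then $S$ inserts a~$1$ at some $k<s$ and subtracts $2$ from the merged entry have no $\delta S$ counterpart. Re-running the sign analysis on these surviving monomials produces exactly $2(-1)^{s+k+\sum_{m<k}n_m}[n_1,\dots,n_{k-1},1,n_k,\dots,n_{s-1}]$ for each $k<s$, which together with the $k=s$ term gives the residual formula.
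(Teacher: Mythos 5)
Your proposal is correct and follows essentially the same route as the paper's proof: the four classes (A)--(D) are exactly the paper's index sets $I_8$, $I_6\cup I_7$, $I_4\cup I_5$, and $I_1\cup I_2\cup I_3$, the two identities $P(1,m)=\tfrac12 Q(m)$ and $P(x-2,y)+P(x,y-2)=P(x,y)$ are the same ones the paper uses, and your treatment of the $n_i\in\{1,2\}$ degeneracies and of the uncancelled $n_s=2$ residue matches the paper's correction terms $T_4'$, $T_5'$, $T_8'$. The only remaining work in both versions is the routine sign bookkeeping you already identify.
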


\begin{proof}
For convenience we introduce the operators $\delta_l$ by  
	\[ \delta_l [m_1, \dots, m_t] = (-1)^{l-1} P(m_l, m_{l+1}) [m_1, \dots, m_{l-1}, m_{l}+m_{l+1}, m_{l+2}, \dots, m_t] \]
and the abbreviations
	\[n_{k,i} = (-1)^{k+1+\sum_{m=1}^{k-1} n_m} [n_1, \dots, n_{k-1}, 1, n_k, \dots, n_{i-1}, n_i-2, n_{i+1}, \dots, n_s] .\]

	Let us first assume that all $n_i > 2$. We compute \[ \delta \circ S [n_1, \dots n_r] = \sum_{\substack{1 \le l \le s \\ k \le i}} \delta_l (n_{k,i} )\] by splitting up the index set \[I = \{1 \le l \le s, 1 \le k \le i \le s \}\] into \[ I = I_1 \sqcup \dots \sqcup I_8\] where 
	\begin{align*} 
	& I_1 = \{ 1 \le l < k-1, k \le i \} & 			&I_4=  \{ l=i, k<i\} \\ 
	&I_2 = \{k+1 \le l < i \} & 				&I_5 = \{l=i+1, k \le i \} \\ 
	&I_3 = \{ i+2 \le l \le s, k \le i\}& 		&I_6= \{l= k-1, k \le i \}  \\ 
	& & 							& I_7= \{l=k, k < i\} \\ 
	& & 							& I_8= \{l=k=i\}. \end{align*}  
	Now we look at the individual summands $T_j = \sum_{I_j} \delta_l(n_{k,i})$ and expand them after doing some index shifts. Write $\ind = k+l+ \sum_{m=1}^{k-1} n_m$.

\begin{align*} 
	T_1   = & \sum_{\substack{l < k-1 \\ k \le i}} \! \! (-1)^{\ind} P(n_l, n_{l+1}) [ \rmydots n_l+n_{l+1} \mydots n_{k-1}, 1, n_k \mydots n_{i-1}, n_{i}-2, n_{i+1} \lmydots] \\
	T_2  = &  \sum_{ k \le l < i-1} \! \! (-1)^{\ind+1} P(n_l, n_{l+1})[ \rmydots n_{k-1}, 1, n_k \mydots n_{l}+n_{l+1} \mydots n_{i-1}, n_{i}-2, n_{i+1} \lmydots ] \\
	T_3 =  & \sum_{  k \le i < l} (-1)^{\ind+1} P(n_l, n_{l+1})[ \rmydots n_{k-1}, 1, n_k \mydots n_{i-1}, n_{i}-2, n_{i+1} \mydots n_{l}+n_{l+1} \lmydots ] 
	\end{align*}
The next terms 
	\begin{align*}
	 T_4 = & \sum_{k< i} (-1)^{k+i + \sum_{m=1}^{k-1} n_m}  P(n_{i-1}, n_i-2)  [ \rmydots n_{k-1}, 1, n_k \mydots n_{i-1} +n_i-2, n_{i+1} \lmydots] \\
	 T_5 = & \sum_{k \le i} (-1)^{k+i+1+ \sum_{m=1}^{k-1} n_m } P(n_i-2, n_{i+1})  [\rmydots n_{k-1}, 1, n_k \mydots n_{i-1}, n_i-2+n_{i+1} \lmydots] 
	\end{align*}
sum up to
	\[ T_4 + T_5 =  \sum_{k \le i} (-1)^{k+i+1+ \sum_{m=1}^{k-1} n_m } P(n_i, n_{i+1})  [\rmydots n_{k-1}, 1, n_k, \mydots, n_{i-1}, n_i-2+n_{i+1} \lmydots] \]
where we use the identity $P(x-2, y) +P(x, y-2) = P(x,y)$.
Altogether we have
	\[ T_1 +T_2 + T_3 + T_4 + T_5 = - S \circ \delta  [n_1 , \dots, n_s] .\]
The terms 
	\begin{align*}
		T_6 = &  \sum_{k \le  i} (-1)^{2k-2+ \sum_{m=1}^{k-1} n_m} P(n_{k-1}, 1) [ \rmydots n_{k-2}, n_{k-1} +1, n_k \mydots n_{i-1}, n_i -2, n_{i+1} \lmydots] \\
	 T_7 = & \sum_{k < i}  (-1)^{2k-1 +  \sum_{m=1}^{k-1} n_m} P(1, n_k) [\rmydots n_{k-1}, 1 +n_k, n_{k+1} \mydots n_{i-1}, n_i-2, n_{i+1} \lmydots] 
	\end{align*}
cancel each other.
The remaining summand 
	\[ T_8 =   \sum_{i} (-1)^{ \sum_{m=1}^{i-1} n_m } P(1, n_i -2) [ \rmydots n_{i-1}, n_{i}-1, n_{i+1} \lmydots] \]
can be identified with
	\[ 2 T_8 = D [n_1, \dots, n_s] .\]
Here we use $P(1, n_i -2) = 1$ if $n_i$ even and $P(1, n_i-2) = 0$ if $n_i$ odd. In the end we get
	\[2 \delta \circ S [n_1, \dots, n_s] = - 2 S \circ \delta [n_1, \dots, n_s] + D [n_1, \dots, n_s] \]

	In case that $n_j = 2$ with $j<s$, all contributions containing $n_j-2$ in $T_4$, $T_5$ and $T_8$ are missing in $ \delta \circ S$, but not in $S \circ \delta$ and $D$. So we have to add
	\begin{align*}
		T_4' = & \sum_{k< j} (-1)^{k+j + \sum_{m=1}^{k-1} n_m }  P(n_{j-1},0)  [\rmydots 1, n_k \mydots n_{j-2}, n_{j-1}, n_{j+1} \lmydots] \\
	T_5' = &  \sum_{k \le j} (-1)^{k+j+1 + \sum_{m=1}^{k-1} n_m} P(0, n_{j+1})  [\rmydots 1, n_k \mydots n_{j-1}, n_{j+1} \lmydots ] \\
	T_8' = & (-1)^{  \sum_{m=1}^{j-1} n_m}  P(1, 0) [ \rmydots n_{j-1}, 1, n_{j+1} \lmydots] 
	\end{align*}
	which simplifies using $P(x,0)=1$ to:
\begin{align*}
	T_4' + T_8' & =  \sum_{k \le j} (-1)^{k+j + \sum_{m=1}^{k-1} n_m }   [\rmydots n_{k-1}, 1, n_k, n_{j-2} \mydots n_{j-1}, n_{j+1} \lmydots] \\
	T_5' & =   \sum_{k \le j} (-1)^{k+j+1 +   \sum_{m=1}^{k-1} n_m} [ \rmydots n_{k-1}, 1, n_k \mydots n_{j-1}, n_{j+1}, \lmydots] 
\end{align*}
Hence we have 
\[ 	(D - 2\delta \circ S - 2 S \circ \delta )[n_1, \dots, n_s] = 2T_4' + 2T_5' + 2T_6' = 0, \]
 if $n_j = 2$ with $j < s$. In the case $n_s=2$, we get
	\begin{align*} &	(D - 2\delta \circ S - 2S \circ \delta )[n_1 \mydots n_{s-1}, 2] \\  & = 2T_4' + 2T_8' \\ & = 2 \sum_{1 \le k \le s} (-1)^{s+ k+  \sum_{m=1}^{k-1} n_m } [n_1 \mydots n_{k-1}, 1, n_k \mydots n_{s-1}].  \end{align*}
A similar argument deals with the case that some $n_j=1$.
\end{proof}

\begin{lem}
	For every partition $[n_1, \dots, n_s]$ with all $n_i$ even we have 
	\[ ( D - 2\delta \circ S - 2 S \circ \delta) \Ins_t [n_1, \dots, n_{s-1}, 2] = 2 (t+1)(-1)^{t+1} \Ins_{k+1} [n_1, \dots, n_{s-1}] .\] 
\end{lem}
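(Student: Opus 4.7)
The plan is to expand $\Ins_t[n_1,\dots,n_{s-1},2]$ as a signed sum of insertions and apply Lemma \ref{E1} to each term. Writing $\Ins_t[n_1,\dots,n_{s-1},2]=(-1)^{st}\sum_{I}\Ins_I[n_1,\dots,n_{s-1},2]$ where $I$ ranges over $t$-subsets of $\{1,\dots,s+t\}$, I would split the sum according to whether $s+t\in I$. When $s+t\in I$ the resulting composition ends in $1$, so the first case of Lemma \ref{E1} gives zero. Hence only $I\subset\{1,\dots,s+t-1\}$ contribute, and for each such $I$ the chain is $\Ins_I[n_1,\dots,n_{s-1},2]=(-1)^{\sum I}[m_1,\dots,m_{s+t-1},2]$, to which the second case of Lemma \ref{E1} applies and outputs $2\sum_{k=1}^{s+t}(-1)^{(s+t)+k+\sum_{u<k}m_u}[m_1,\dots,m_{k-1},1,m_k,\dots,m_{s+t-1}]$.

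Next, I would reorganize the double sum over $(I,k)$ by its output. Each resulting chain is a length $s+t$ composition containing $n_1,\dots,n_{s-1}$ interspersed with exactly $t+1$ ones, at positions $J=\{j_1<\dots<j_{t+1}\}$. There is a natural bijection between the producing pairs $(I,k)$ and the index $r\in\{1,\dots,t+1\}$ singling out which of the $t+1$ ones was freshly inserted by Lemma \ref{E1}: namely $k=j_r$ and $I=\{j_l:l<r\}\cup\{j_l-1:l>r\}$. Thus each target composition arises in exactly $t+1$ ways from the left-hand side, and I must show these $t+1$ contributions share a common sign and add up correctly.

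The core of the argument is the sign computation. Using $\sum I=\sum J-j_r-(t+1-r)$ and the fact that, because every $n_i$ is even, the partial sum $\sum_{u<k}m_u$ has the same parity as the number of ones to the left of $k=j_r$ in $[m_1,\dots,m_{s+t-1}]$, which is $r-1$, the total exponent collapses to $st+(\sum J-j_r-(t+1-r))+(s+t)+j_r+(r-1)\equiv st+s+\sum J\pmod 2$, independent of $r$. Summing over $r=1,\dots,t+1$ and multiplying by $2$ yields the coefficient $2(t+1)(-1)^{st+s+\sum J}$ on the basis element indexed by $J$. On the right-hand side, $2(t+1)(-1)^{t+1}\Ins_{t+1}[n_1,\dots,n_{s-1}]$ places the coefficient $2(t+1)(-1)^{t+1+(s-1)(t+1)+\sum J}=2(t+1)(-1)^{s(t+1)+\sum J}=2(t+1)(-1)^{st+s+\sum J}$ on the same basis element, and the two expressions agree.

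The main obstacle is the sign bookkeeping: one must accurately track the three separate sign conventions (from $\Ins_t$, from $\Ins_I$, and from Lemma \ref{E1}) and verify that the $r$-dependence cancels thanks to the parity of $\sum_{u<k}m_u$. Once the bijection $(I,k)\leftrightarrow(J,r)$ and the even-$n_i$ parity observation are in place, the remainder is mechanical algebra; the small-case checks ($s=1$, $t=0,1$, and $[4,2]$ with $t=0$) already confirm the formula and serve as useful sanity tests.
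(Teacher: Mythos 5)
Your proposal is correct and follows essentially the same route as the paper: both apply Lemma \ref{E1} termwise to the expansion of $\Ins_t[n_1,\dots,n_{s-1},2]$ (terms not ending in $2$ vanish), regroup by the output basis element $\Ins_J[n_1,\dots,n_{s-1}]$, and check that the $t+1$ contributions carry a common sign $(-1)^{st+s+\sum J}$ matching the right-hand side. Your write-up is in fact more explicit than the paper's about the bijection $(I,k)\leftrightarrow(J,r)$ and about where the hypothesis that all $n_i$ are even enters (via the parity of $\sum_{u<k}m_u$), and you correctly read $\Ins_{k+1}$ in the statement as the typo $\Ins_{t+1}$.
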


\begin{proof} Take any $I \subset \{1, \dots, s+t \}$ with $|I| = t+1$. The coefficient of the term $ \Ins_I [n_1 \mydots n_{s-1}]$ in  $(D- 2\delta \circ S - 2 S \circ \delta) \Ins_t[n_1 \mydots n_{s-1}, 2]$ is given by 
	\[ 2 (-1)^{st+t} \sum_{i \in I} (-1)^{i + \sum_{j \in I, j <i} 1 + \sum_{j \in I, j < i} j + \sum_{j \in I, j>i} (j-1)} = 2 (-1)^{s(t+1)} (t+1) (-1)^{\sum_{j \in I} j} . \]
	This is the coefficient of $ \Ins_I [n_1 \mydots n_{s-1}]$ in $2 (t+1) (-1)^{t+1} \Ins_{t+1} [n_1, \dots, n_{s-1}]$.
\end{proof}

\begin{cor} \label{E2} Let $p > 2$. Take $x_1^{c_1} \cdots x_k^{c_k}  y_1^{d_1} \cdots y_l^{d_l} y_0$ with size $m$. Then
	\[ (D - 2 \delta \circ S - 2 S \circ \delta)( x_1^{c_1} \cdots x_k^{c_k} y_1^{d_1} \cdots y_l^{d_l}) = 0 \] 
and
	\[ (D - 2 \delta \circ S - 2 S \circ \delta)( x_1^{c_1} \cdots x_k^{c_k} y_1^{d_1} \cdots y_l^{d_l} y_0) = 2 (-1)^{n-m+1} (n-m+1) x_1^{c_1} \cdots x_k^{c_k} y_1^{d_1} \cdots y_l^{d_l}. \]
\end{cor}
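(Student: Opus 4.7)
Write $\Phi := D - 2\,\delta\circ S - 2\, S\circ\delta$. The plan is to expand each basis chain as $\Ins_{t}\Perm[n_1,\dots,n_s]$, apply $\Phi$ term by term, and use Lemmas \ref{E1} and the preceding lemma to isolate the surviving contributions. Since $p>2$, every entry appearing in the compositions for $x_a$, $y_b$ ($b\ge 1$), or $y_0$ (namely $2p^{a-1}$, $2p^{a-1}(p-1)$, $2p^b$, or the bare $2$) is even, so the preceding lemma will apply whenever a composition ending in $2$ shows up.

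\emph{First identity.} For $x_1^{c_1}\cdots x_k^{c_k}y_1^{d_1}\cdots y_l^{d_l}$ (no $y_0$), the only entries of value $2$ in the underlying composition are the leading entries of the $x_1$-blocks $[2,2(p-1)]$. Since $P(2,2(p-1))=p\equiv 0\pmod p$, in every $\Perm$-term each such $2$ must precede at least one $2(p-1)$; hence no $\Perm$-term ends in $2$. Inserting $1$'s via $\Ins$ also cannot create an ending $2$, so Lemma \ref{E1} forces $\Phi$ to vanish on every composition in the support, which is the first identity.

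\emph{Second identity.} For $x_1^{c_1}\cdots y_l^{d_l}y_0$, I would argue that the terms of $\Ins_{n-m}\Perm[n_1,\dots,n_{s-1},2]$ ending in $2$ are exactly those in which the underlying permutation $\sigma$ fixes the last position $s$. The trailing $n_s=2$ coming from $y_0$ is by construction the rightmost $2$; the order-preservation rule for equal entries combined with $P(2,2(p-1))\equiv 0$ forces this $2$ to remain to the right of every other entry of value $2$ or $2(p-1)$ in every $\Perm$-term, while $p>2$ ensures no other entry equals $2$. Consequently a $\Perm$-term ends in $2$ iff $\sigma(s)=s$, and the sub-sum of such terms factors as $\Perm[n_1,\dots,n_{s-1}]\cdot[2]$ with signs preserved (fixing the last position contributes no inversions). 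The remaining $\Perm$-terms are killed by $\Phi\Ins_{n-m}$ via Lemma \ref{E1}, while the surviving ones fall under the hypothesis of the preceding lemma (all entries even, last entry $2$). Applying that lemma with $t=n-m$ and summing over $\sigma$ gives
\[
\Phi\bigl(\Ins_{n-m}\Perm[n_1,\dots,n_{s-1},2]\bigr)=2(n-m+1)(-1)^{n-m+1}\,\Ins_{n-m+1}\Perm[n_1,\dots,n_{s-1}],
\]
which is precisely $2(-1)^{n-m+1}(n-m+1)\,x_1^{c_1}\cdots x_k^{c_k}y_1^{d_1}\cdots y_l^{d_l}$ viewed as a chain in $A_{n-1}^{\bullet}$ (size $m-2$, so $n-1-(m-2)=n-m+1$ inserted $1$'s).

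The hard part will be the combinatorial verification that the $\sigma(s)=s$ permutations are the only contributors and that they repackage into $\Perm[n_1,\dots,n_{s-1}]$ on the nose. This is a direct Perm-constraint check (equal entries preserve order, $P\equiv 0$ cases preserve order), made possible by the assumption $p>2$ which keeps every non-$x_1$, non-$y_0$ entry strictly above $2$. Once this combinatorial point is settled, the formula follows from the preceding lemma with no further calculation.
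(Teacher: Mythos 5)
Your argument is correct and follows the same route the paper intends: Corollary \ref{E2} is stated there as an immediate consequence of Lemma \ref{E1} and the preceding lemma on $\Ins_t[n_1,\dots,n_{s-1},2]$, and your $\Perm$-combinatorics (each $x_1$-entry $2$ must precede a $2(p-1)$ because $P(2,2(p-1))=p\equiv 0$, so for $p>2$ only the $y_0$-entry can terminate a term, and the terminating terms repackage as $\Perm[n_1,\dots,n_{s-1}]\cdot[2]$) is exactly the step the paper leaves implicit. The only slip is cosmetic: in the paper's normalization the $y_0$-entry $2p^{j_1}=2$ is the \emph{first}, not the last, of the $y$-entries, so moving it to the end costs a sign $(-1)^{d_1+\dots+d_l}$ that the paper itself also suppresses by writing the monomial as $\cdots y_l^{d_l}y_0$.
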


\begin{cor} \label{E3} Let $p=2$. Take $x_1^{c_1} \cdots x_k^{c_k}  y_1^{d_1} \cdots y_l^{d_l} y_0$ with size $m$. Then
	\[ (D - 2 \delta \circ S - 2 S \circ \delta)( x_2^{c_2} \cdots x_k^{c_k} y_1^{d_1} \cdots y_l^{d_l} ) = 0 \] 
	and if $c_1>0$
\[ (D - 2 \delta \circ S - 2 S \circ \delta)( x_1^{c_1} \cdots x_k^{c_k} y_1^{d_1} \cdots y_l^{d_l} ) = 2 (-1)^{n-m+3} (n-m+3) x_1^{c_1-1} \cdots x_k^{c_k} y_1^{d_1} \cdots y_l^{d_l} y_0. \]
Furthermore,
	\[ (D - 2 \delta \circ S - 2 S \circ \delta)( x_1^{c_1} \cdots x_k^{c_k} y_1^{d_1} \cdots y_l^{d_l} y_0) = 2 (-1)^{n-m+1} (n-m+1) x_1^{c_1} \cdots x_k^{c_k} y_1^{d_1} \cdots y_l^{d_l}. \]
\end{cor}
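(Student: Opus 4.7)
The plan is to reduce everything to the preceding Ins-lemma by expanding each monomial as $\Ins_t\Perm[\mathrm{base}]$ and processing it summand by summand. For $p=2$ every base-entry is a power of $2$; entries equal to $2$ come only from $x_1=[2,2]$ and $y_0=[2]$, while $2^i$ with $i\ge 2$ comes from $x_i$ or $y_{i-1}$. By Lemma~\ref{E1}, a summand of $\Ins_t\Perm[\mathrm{base}]$ is killed by $D-2\delta\circ S-2S\circ\delta$ unless its composition ends in $2$, and in that case the previous lemma evaluates it as $2(t+1)(-1)^{t+1}\Ins_{t+1}$ of the composition with its trailing $2$ removed. The first identity is then immediate: when neither $x_1$ nor $y_0$ appears, no base-entry equals $2$, so every summand of $\Ins_t\Perm[\mathrm{base}]$ ends in an entry $\ge 4$ or in an inserted $1$, and the result is $0$.

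For the other two identities I would identify the ``last $2$'' of the base in Perm-order, using that Perm preserves the order of equal-valued entries: it is $y_0$'s $2$ in case~(iii), and the right-hand $2$ of $x_1^{(c_1)}$ in case~(ii). Letting $p_0$ denote its base-position and $s$ the length of the base, the summands of $\Perm[\mathrm{base}]$ ending in $2$ are exactly those permutations $\sigma$ with $\sigma(s)=p_0$. Factoring any such $\sigma$ through the cycle $(p_0,p_0+1,\ldots,s)$ one finds that, after the trailing $2$ is removed, these summands assemble into $(-1)^{s-p_0}\Perm[B^{\#}]$, where $B^{\#}$ is the base with its $p_0$-th entry deleted. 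In case~(iii) the convention of writing $y_0$ at the end of the monomial places its $2$ at the last position of the base, so $p_0=s$ and $B^{\#}$ is directly the base of the target monomial $x_1^{c_1}\cdots y_l^{d_l}$; multiplying by the Ins-coefficient $2(t+1)(-1)^{t+1}$ with $t=n-m$ then yields the stated formula.

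Case~(ii) is the main new point: $p_0=2c_1$, and $B^{\#}$ has its $2c_1-1$ twos clumped at the start, whereas the base of the target monomial $x_1^{c_1-1}\cdots y_l^{d_l}y_0$ has $2c_1-2$ twos at the start and $y_0$'s $2$ at the final position. These two bases are related by the cycle that slides the trailing $2$ past the $s-2c_1$ non-$2$ entries lying between them. Since $P(2,2^a)\not\equiv 0\pmod 2$ for every $a\ge 2$, each elementary swap of a $2$ with a larger power of $2$ is unconstrained by Perm, so the identity $\Perm[\tau\cdot B]=\operatorname{sign}(\tau)\Perm[B]$ applies; the resulting cycle-sign $(-1)^{s-2c_1}$ exactly cancels $(-1)^{s-p_0}=(-1)^{s-2c_1}$ from the previous step, and the sub-sum equals $\Perm$ of the target base. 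Combined with the Ins-coefficient $2(t+1)(-1)^{t+1}$ at $t=n-m+2$ (the input monomial has size $m-2$) this produces the second identity. The main technical obstacle is precisely this two-step sign cancellation, which hinges on the position $p_0=2c_1$ of the distinguished $2$ being the last $x_1$-entry in the base and on the $P\not\equiv 0$ relations between $2$ and the higher powers of $2$ that allow the rearrangement of $B^{\#}$ into the target base.
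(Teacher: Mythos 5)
The paper states Corollary \ref{E3} without proof, as an immediate consequence of Lemma \ref{E1} and the lemma on $\Ins_t[n_1,\dots,n_{s-1},2]$, so your job was to supply the missing derivation, and your strategy is exactly the intended one: expand the monomial as $\pm\Ins_t\Perm[\mathrm{base}]$, note that for $p=2$ the $\Perm$-constraints reduce to order-preservation of equal entries, so the only summands not killed by Lemma \ref{E1} are the permutations placing the \emph{last} $2$-entry of the base at the final position, apply the $\Ins$-lemma to each such summand, and reassemble the result as $\Perm$ of the target base. Case (i) and the identification of the surviving permutations with $\{\sigma:\sigma(s)=p_0\}$ are correct.

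There is, however, a concrete sign-bookkeeping slip in cases (ii) and (iii), rooted in where $y_0$'s entry sits in the base. By the paper's definition the base lists the $y$-entries in increasing order of the index ($0\le j_1<\cdots<j_l$), so $y_0$'s entry $2$ occupies position $p_0=2(c_1+\cdots+c_k)+1$, immediately after the $x$-block and \emph{before} the entries of $y_1,\dots,y_l$ --- not the last position of the base, as you assert in case (iii); likewise in case (ii) the trailing $2$ of $B^{\#}$ should be slid only past the $2(c_2+\cdots+c_k)$ higher $x$-entries (an even number, sign $+1$), not all the way to the end. Carried out with the paper's convention, the two-step computation acquires an extra factor $(-1)^{d_1+\cdots+d_l}$ relative to the displayed formulas: for instance $y_0y_1=[2,4]-[4,2]$ in $A_6^\bullet$ gives $(D-2\delta\circ S-2S\circ\delta)(y_0y_1)=2([1,4]-[4,1])=+2\,y_1$, whereas the stated coefficient $2(-1)^{n-m+1}(n-m+1)$ equals $-2$ here. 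The discrepancy is resolved exactly by reading the monomial $x_1^{c_1}\cdots y_l^{d_l}y_0$ through graded-commutative reordering (moving the odd-degree $y_0$ past $y_1^{d_1}\cdots y_l^{d_l}$ costs $(-1)^{\sum_j d_j}$), which is presumably what the corollary intends; you should state this interpretation explicitly instead of silently relocating $y_0$'s entry to the end of the base composition. None of this affects the later applications, which use only the divisibility of the coefficient $2(n-m+1)$ by $p$ and hence the rank of $D^*$, but as written the sign argument does not go through under the paper's stated conventions.
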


This allows us to compute the map $D^* \colon H^i(A_n^\bullet) \to H^{i-1}(A_{n-1}^\bullet)$ with both $\ZZ$ and $\FP$-coefficients.

\section{Proof of Main Theorem}

	\begin{proof}[Proof of Th. \ref{mainfp}] By lemma \ref{E1} and corollary \ref{E2} we can conclude that the rank of the map $D^* \colon H^r(A_n^\bullet, \FP) \to H^{r-1}(A_{n-1}^\bullet, \FP)$ is given by the number of monomials \[ x_1^{c_1} \dots x_k^{c_k} y_0 y_1^{d_1} \dots y_l^{d_l} \] of degree $r$ and size $m \le n$ such that $p \nmid 2(n-m+1)$. Equivalently, the rank can be written as 
	\[ B'_p(n, r) = \left | \left \{ \begin{matrix} 1 \le a_1 \le a_2 \le \dots \le a_g \\   1 \le b_1 < b_2 < \dots < b_h \end{matrix} \middle |  \begin{matrix} 2\sum_i p^{a_i}+ 2\sum_j p^{b_j}+1 -  2g-h  = r \\ 2\sum_i p^{a_i}+ 2\sum_j p^{b_j}+2  \le n \\ p \nmid  2(n- 2\sum_i p^{a_i} - 2\sum_j p^{b_j} -1)  \end{matrix} \right \} \right | \] 
	By the long exact sequence of lemma \ref{longexact} we have determined
\[ \dim H^r(C_n(S^2), \FP) = B_p(n,r)+ B_p(n-1, r-2) - B'_p(n, r)- B'_p(n, r-1).\]
\end{proof}

	\begin{cor} \cite{salvatore}
		This can be written as a generating series. Let \[Q = \prod_{i>0}\frac{1+w^{2p^i-1}z^{2p^i}}{1-w^{2p^i-2}z^{2p^i}}.\] Then we have for $p>2$:
		\[ \sum_{r, n \ge 0} \dim H^r(C_n(S^2), \FP) \,  w^r z^n= \left ( \frac{1}{1-z} + \frac{w z^{p+1}}{1-z^p} + \frac{w^3 z^3}{1-z} + \frac{w^2 z}{1-z^p} \right ) Q\]
\end{cor}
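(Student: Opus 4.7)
The plan is to translate Theorem \ref{mainfp} directly into a generating series identity: each of the four terms $B_p(n,r)$, $B_p(n-1,r-2)$, $B'_p(n,r)$, $B'_p(n,r-1)$ has a clean generating series, and the claim should follow from routine algebra after summing them.

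For the first two terms I would simply invoke the generating series for $B_p$ recorded in the previous corollary, namely $\sum_{n,r} B_p(n,r)\, w^r z^n = \frac{1+wz^2}{1-z}\, Q$, and apply a trivial index shift to obtain $\sum_{n,r} B_p(n-1, r-2)\, w^r z^n = w^2 z \cdot \frac{1+wz^2}{1-z}\, Q$. Their sum then factors as $\frac{(1+wz^2)(1+w^2 z)}{1-z}\, Q$.

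Next I would compute the $B'_p$ pieces. Observe that $Q$ is exactly the weighted enumerator of tuples $(a_1 \le \cdots \le a_g,\, b_1 < \cdots < b_h)$ with $a_i, b_j \ge 1$, assigning weight $w^{N-2g-h} z^N$ where $N = 2\sum p^{a_i} + 2\sum p^{b_j}$. The extra $+1$ in the $w$-exponent of $B'_p$ yields an overall factor $w$, while the combined condition $N + 2 \le n$ and $p \nmid 2(n-N-1)$ -- which reduces to $p \nmid (n-N-1)$ once $p > 2$ -- is captured by setting $n = N + 2 + k$ with $k \ge 0$, $p \nmid k+1$. A direct geometric series calculation gives $\sum_{k \ge 0,\, p \nmid k+1} z^k = \frac{1}{1-z} - \frac{z^{p-1}}{1-z^p}$, so that $\sum B'_p(n,r)\, w^r z^n = wQ\left(\frac{z^2}{1-z} - \frac{z^{p+1}}{1-z^p}\right)$, and a further $w$-shift yields $\sum B'_p(n,r-1)\, w^r z^n = w^2 Q\left(\frac{z^2}{1-z} - \frac{z^{p+1}}{1-z^p}\right)$.

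The remaining step is algebraic cleanup: subtracting the two $B'_p$ series from $\frac{(1+wz^2)(1+w^2 z)}{1-z}\, Q$, the $\frac{wz^2}{1-z}$ terms cancel against the $wz^2$ part of the first bracket, leaving $\frac{1 + w^3 z^3}{1-z} + \frac{wz^{p+1}}{1-z^p}$ plus a $w^2$-block. That block, $\frac{w^2 z - w^2 z^2}{1-z} + \frac{w^2 z^{p+1}}{1-z^p} = w^2 z + \frac{w^2 z^{p+1}}{1-z^p}$, collapses over a common denominator to $\frac{w^2 z}{1-z^p}$, producing exactly the stated four-term parenthesis times $Q$. The only real obstacle is keeping this bookkeeping tidy; the one place where the hypothesis $p > 2$ enters essentially is in removing the factor of $2$ from the non-divisibility condition so that the geometric series splits cleanly.
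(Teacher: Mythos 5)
Your derivation is correct: the paper states this corollary without proof (citing Salvatore), and your termwise translation of Theorem \ref{mainfp} into generating series—using the earlier series for $B_p$, the observation that $Q$ enumerates the $B'_p$-tuples with an extra factor of $w$, and the geometric-series split for the condition $p \nmid k+1$—is exactly the intended route, with the final algebraic simplification checking out. No gaps.
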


\begin{cor}
	Our description implies eventual periodicity \[ \dim H^r(C_{n+p}(S^2), \FP) = \dim H^r(C_{n}(S^2), \FP) \] if $n \ge 2r$. 
\end{cor}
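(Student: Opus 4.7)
The plan is to read off the periodicity directly from the closed-form expression of Theorem \ref{mainfp},
\[ \dim H^r(C_n(S^2),\FP) = B_p(n,r) + B_p(n-1,r-2) - B'_p(n,r) - B'_p(n,r-1), \]
by showing that each of the four counting functions on the right is $p$-periodic in $n$ once $n\ge 2r$. The key preliminary estimate I would establish is a uniform size bound on basis monomials of $H^*(C_\infty(\CC),\FP)$: using $\size(x_i)-\deg(x_i)=2$, $\size(y_j)-\deg(y_j)=1$, together with $\deg(x_i)\ge 2p-2$, $\deg(y_j)\ge 2p-1$ for $j\ge 1$, and $\deg(y_0)=1$, a short case split (whether or not $y_0$ appears) will give $\size\le 2\deg$ for every monomial in the Vainshtein basis.

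With that estimate in hand, $B_p(n,s)$, which counts degree-$s$ basis monomials of size at most $n$, becomes independent of $n$ as soon as $n\ge 2s$. Applied with $s=r$, and with $s=r-2$ for the shifted term $B_p(n-1,r-2)$ (whose threshold $n\ge 2r-3$ is subsumed by $n\ge 2r$), this makes the first two contributions constant in $n$, hence trivially $p$-periodic, for $n\ge 2r$.

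For $B'_p(n,s)$ the same bound controls the underlying set of admissible monomials of the form $x_{\vec a}y_{\vec b}y_0$ with $b_j\ge 1$: their total size is still at most $2s$, so once $n\ge 2s$ the list of candidates no longer grows with $n$. The only remaining $n$-dependence then sits in the divisibility constraint $p\nmid 2(n-m-1)$, which depends on $n$ only through $n\bmod p$. This makes $B'_p(n,s)$ $p$-periodic in $n$ for $n\ge 2s$; applying it to $s=r$ and $s=r-1$ takes care of the last two terms and finishes the argument. The case $p=2$ is even easier because $2\mid 2(n-m-1)$ always, so $B'_2\equiv 0$ and the statement reduces to the stability of the two $B_2$-terms.

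I expect the only real obstacle to be the elementary size estimate $\size\le 2\deg$: the delicate point is the $y_0$ case, where the extra $y_0$ contributes $1$ to the degree but also $1$ to $2g+h$, and must be absorbed by using the stricter bound $\deg(y_j)\ge 2p-1\ge 3$ for the remaining $y_j$ with $j\ge 1$. Once that inequality is nailed down, the rest is routine bookkeeping with the constraints defining $B_p$ and $B'_p$.
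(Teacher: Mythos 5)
Your argument is correct and is essentially the paper's own proof: the paper likewise uses $\sum_i p^{a_i}+\sum_j p^{b_j}\ge 2g+h$ to bound the size of every contributing monomial by (roughly) twice its degree, which freezes the $B_p$-terms and leaves only the residue of $n$ modulo $p$ in the $B'_p$-terms. Your size estimate $\size\le 2\deg$ and the $y_0$ case split are just a monomial-level restatement of that inequality, so nothing further is needed (the sign slip $n-m-1$ versus $n-m+1$ in the divisibility condition is immaterial modulo $p$).
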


\begin{proof} 
	As $ \sum_{i=1}^g p^{a_i} + \sum_{j=1}^h p^{b_j} \ge 2g +h$, we get the inequalities $r \ge 2g +h+1$ and  $\sum_{i=1}^g p^{a_i} + \sum_{j=1}^h p^{b_j} \le 2 r-2$.
	Hence we have for $n \ge 2r+2$:
	\begin{align*} B_p(n, r) = B_p(n+1, r) & &  B'_p(n+p, r)= B'_p(n,r)  \end{align*}
\end{proof}

	\begin{proof}[Proof of Th. \ref{mainzz}]

		For $n \le 3$, we can easily check the theorem by hand. Take $n \ge 4$. We look at the beginning of the long exact sequence of lemma \ref{longexact}. We immediately read off
		\[ H^0(B_n^\bullet) \simeq H^0(\An).\]	As $H^2(\An) = H^2(A_{n-1}^\bullet) = 0$ by application of lemma \ref{mainvain}, we get the exact sequence 
	\begin{align*} 0 \to H^1(B_n^\bullet) \to H^1(\An) \xrightarrow{D^*} H^0(A_{n-1}) \to H^2(B_n^\bullet) \to 0.\end{align*}
		The group $H^1(\An) = \ZZ$ is generated by the class of $y_0$ and the group $H^0(A_{n-1}^\bullet) = \ZZ$ is generated by the class $1$ with the map $D^*(y_0) = (2n-2) \cdot 1$ by lemma \ref{E2}. Hence we see \begin{align*} H^1(B_n^\bullet) = 0  & & H^2(B_n^\bullet) = \ZZ / (2n-2) \ZZ.\end{align*}

If we had $D = 2 \delta \circ S + 2 S \circ \delta$, we would have a chain map
\[ A_n^\bullet \to B_n^\bullet, \, \, a \mapsto (a,- \, 2(-1)^{n-r} S(a)),  \]
that would split the sequence
	\[ 0 \to A_{n-1}^\bullet [2] \to B_n^\bullet \to A_n^\bullet \to 0, \, \, a_2 \mapsto (0, a_2), \, (a_1, a_2) \mapsto a_1\]
on the right.

	In our case, the long exact sequence of lemma \ref{longexact}  gives us short exact sequences
	\[0 \to \Coker D^* \to H^r(B_n^\bullet) \to \Ker D^* \to 0. \]
		We want to construct a right splitting $s: \Ker D^* \to H^r(B_n^\bullet)$. For $r \ge 2$, the cohomology group $H^r(\An)$ is finite and has no elements of order $p^2$. For every prime $p$, we can take a $\FP$-basis of the $p$-torsion in $\Ker D^*$ consisting of the classes $\overline{b_i}$ of the chains \[b_i = \tilde{\beta} (m_i) = \frac{1}{p} \delta(m_i)\] for some monomials $m_i = x_1^{a_1} \dots x_k^{a_k}y_1^{b_1} \dots y_l^{b_l}  y_0^{b_0} \in \An$. By corollary \ref{E2}, we can find integers $k_i$ and monomials $m_i'$ such that 
\[ (D -  2 S \circ \delta - 2 S \delta \circ S )(m_i) = k_i p m_i'. \] If $p \neq 2$ and $y_0 \mid m_i$, we have $m_i' = x_1^{a_1} \dots x_k^{a_k} y_1^{b_1} \dots y_l^{b_l}$. 
Define $E = D - 2 S \circ \delta - 2 \delta \circ S$. Observe that $ E \circ S = S \circ E$. Hence we get
		\begin{align*} E(m_i) = p k_i m_i'  & & E( b_i) = k_i \delta(m_i'). \end{align*}

	
	Define a map \[ s \colon \Ker D^* \to H^r(B_n^\bullet, \ZZ) \] by setting 
	\[ s(\bar{b_i}) =  \left (b_i, \,-  2 (-1)^{n-r} S(b_i)- (-1)^{n-r} k_i m_i' \right ).\] We see that
	\begin{align*} 
		\Delta \circ s( \bar{b_i}) & =  \left (\delta (b_i), \, -2 (-1)^{n-r} \delta \circ S (b_i) + (-1)^{n-r} D(b_i) - (-1)^{n-r} k_i \delta (m'_i ) \right ) \\
		 & = 		\left (\delta (b_i), \, 2 (-1)^{n-r} S  \circ \delta(b_i)+ (-1)^{n-r}  E (b_j) -(-1)^{n-r} k_i \delta(m'_i) \right ) \\
	& 	= 0 \end{align*} and hence $s(\bar{b_i})$ is a cycle in $H^r(B_n^\bullet, \ZZ)$. We have to show that $p s(\bar{b_i})$ is a boundary. We have $p b_i = \delta(m_i)$ and can compute
	\begin{align*} 
p s(\bar{b_i}) & = \left (p b_i, \,  -2 (-1)^{n-r} S(pb_i) - (-1)^{n-r} p k_i m'_i \right ) \\ 
		& = \left (\delta (m_i), \, -2 (-1)^{n-r} S \circ \delta(m_i) - (-1)^{n-r} p k_i m'_i \right ) \\ 
		& = \left (\delta (m_i), \, (-1)^{n-r}( 2 \delta \circ S(m_i) -  D(m_i) +  E(m_i)-  p k_i m'_i) \right ) \\
		& = \left (\delta (m_i), \,  2 (-1)^{n-r} \delta \circ S(m_i) - (-1)^{n-r}  D(m_i) \right ) \\
		& = \Delta \left ( m_i, \, S(m_i) \right ).
	\end{align*}
	Hence $s$ is a well-defined right splitting of the sequence
	\[0 \to \Coker D^* \to H^r(B_n^\bullet) \to \Ker D^* \to 0. \]
		For $r \ge 3$, both $\Ker D^*$ and $\Coker D^*$ have no elements of $p^2$, thus the same is true for $H^r(B_n^\bullet)$.
\end{proof}

\begin{example}
	We want to compute the 3-torsion in the groups $H^6(C_9(S^2), \ZZ)$ and $H^6(C_{10}(S^2), \ZZ)$. We use the long exact sequence
	\[ \dots \to H^{5}(A_{n}^\bullet) \xrightarrow{ D^*}  H^{4}(A_{n-1}^\bullet) \rightarrow H^6(B_{n}^\bullet) \rightarrow H^6(A_{9}^\bullet) \xrightarrow{D^*} H^{5} (A_{n-1}^\bullet) \rightarrow \dots \]

For $p=3$, the generators of $H^*(A_n^\bullet, \ZZ_3)$ are:
	\begin{figure}[h]
	\begin{tabular}{c|cccccc}
		generator & $x_1$ & $x_2$  & $y_0$ & $y_1$ & $y_2$ & \dots \\
		\midrule
		degree & 4 & 16 & 1 & 5 & 17 & \dots \\
		size & 6 & 18 & 2 & 6 & 18 & \dots \\
	\end{tabular}
	\end{figure}
\end{example}
So \[ H^6(A_{9}^\bullet, \ZZ_3) = H^6(A_{10}^\bullet, \ZZ_3) = \ZZ_3  y_0 y_1.\]
and \[ H^4(A_9^\bullet, \ZZ_3) = H^4(A_{10}^\bullet, \ZZ_3) = \ZZ_3 x_1 \]

We have $D^*(y_0 y_1) = 2(n-7) y_1$ and $D^*(x_1 y_0) = 2 (n- 7)x_1$. Hence we get
\begin{align*} H^6(B_9^\bullet, \ZZ_3) =  0 & & H^6(B_{10}^\bullet, \ZZ_3) = \ZZ_3^2. \end{align*}
The Bockstein $\tilde{\beta}(x_1y_0) = y_0 y_1$ shows 
 \[ H^6(A_9^\bullet, \ZZ)_{(3)} = H^6(A_{10}^\bullet, \ZZ)_{(3)} = \ZZ_3  y_0 y_1\]
and \[ H^4(A_9^\bullet, \ZZ)_{(3)} = H^4(A_{10}^\bullet, \ZZ)_{(3)} = 0. \]
We get 
\begin{align*} H^6(B_9^\bullet, \ZZ)_{(3)} =  0 & & H^6(B_{10}^\bullet, \ZZ)_{(3)} = \ZZ_3. \end{align*}

%
%
%
%

\section{Some Tables}

The tables \ref{cczz} and \ref{s2zz} were computed with the help of the computer algebra systems Sage \cite{sage} and Magma \cite{magma}. The cohomology groups $H^r(C_n(S^2), \ZZ)$ have already been determined for $n \le 9$ by Sevryuk \cite{sevryuk} and Napolitano \cite{napolitano}.
%
%
%
%
%
%
%
\begin{landscape}
\begin{table}[h]
	\caption{ Cohomology groups $H^i(C_n(\CC), \ZZ)$}
	\label{cczz}
	\small
	\begin{tabular}{c|cccccccccccccccc}
		\diagbox{$n$}{$i$}	& 0 & 1 & 2 & 3 & 4 & 5 & 6 & 7 & 8 & 9 & 10 & 11 & 12 & 13 & 14 & 15 \\
		\hline
		$1$ & $\ZZ$ \\
		$2,3  $& $\ZZ $&$  \ZZ $& \\
		$4,5$ & $\ZZ $&$  \ZZ $& 0 & $\ZZ_2$ \\
		$6,7$ & $\ZZ $&$  \ZZ $& 0 & $\ZZ_2$ & $\ZZ_2$ & $\ZZ_3$ \\
		$8,9$ & $\ZZ $&$  \ZZ $& 0 & $\ZZ_2$ & $\ZZ_2$ & $\ZZ_6$ & $\ZZ_3$ & $\ZZ_2$ \\
		$10, 11$ & $\ZZ $&$  \ZZ $& 0 & $\ZZ_2$ & $\ZZ_2$ & $\ZZ_6$ & $\ZZ_6$ & $\ZZ_2$ & $\ZZ_2$ & $\ZZ_5$ \\
		$12, 13$ & $\ZZ $&$  \ZZ $& 0 & $\ZZ_2$ & $\ZZ_2$ & $\ZZ_6$ & $\ZZ_6$ & $\ZZ_2^2$  & $\ZZ_2$ & $\ZZ_2 \plh \ZZ_3 \plh \ZZ_5$ & $\ZZ_2 \plh \ZZ_5$ \\
		$14, 15$ & $\ZZ $&$  \ZZ $& 0 & $\ZZ_2$ & $\ZZ_2$ & $\ZZ_6$ & $\ZZ_6$ & $\ZZ_2^2$  & $\ZZ_2^2$ & $\ZZ_2 \plh \ZZ_3 \plh \ZZ_5$ & $\ZZ_2^2 \plh \ZZ_3 \plh \ZZ_5$ & $\ZZ_2$ & $0$ & $\ZZ_7$ \\
		$16, 17$ & $\ZZ $&$  \ZZ $& 0 & $\ZZ_2$ & $\ZZ_2$ & $\ZZ_6$ & $\ZZ_6$ & $\ZZ_2^2$  & $\ZZ_2^2$ & $\ZZ_2^2 \plh \ZZ_3 \plh \ZZ_5$ & $\ZZ_2^2 \plh \ZZ_3 \plh \ZZ_5$ & $\ZZ_2^2$ &$\ZZ_2$ & $\ZZ_2 \plh \ZZ_7$ & $\ZZ_7$ & $\ZZ_2$ \\
	\end{tabular}
	\vspace{0.5cm}
	\caption{ Cohomology groups $H^i(C_n(S^2), \ZZ)$}
	\label{s2zz}
	\small
	\begin{tabular}{c|cccccccccccccccc}
		\diagbox{$n$}{$i$}	& 0 & 1 & 2 & 3 & 4 & 5 & 6 & 7 & 8 & 9 & 10 & 11 & 12 & 13 & 14 & 15 \\
		\hline
		$1$ & $\ZZ$ & $0$ & $\ZZ$ \\
		$2$ & $\ZZ$ & $0$ & $\ZZ_2$ \\
		$3  $& $\ZZ $&$  0 $&$   \ZZ_4 $&$  \ZZ $\\
		$4 $&$   \ZZ $ & 0 & $ \ZZ_6$  & $\ZZ \plh \ZZ_2$  \\
		$5 $&$   \ZZ $&$ 0 $&$  \ZZ_8 $&$ \ZZ \plh \ZZ_2 $&$ 0  $&$ \ZZ_2$ \\
		$6 $&$   \ZZ $&$ 0 $&$  \ZZ_{10} $&$ \ZZ \plh \ZZ_2 $&$ \ZZ_2 $&$ \ZZ_2 \plh \ZZ_3     $& \\
		$7 $&$   \ZZ $&$ 0 $&$  \ZZ_{12} $&$ \ZZ \plh \ZZ_2 $&$ \ZZ_2 $&$ \ZZ_2 \plh \ZZ_3      $&$ \ZZ_2      $&$ \ZZ_3$ \\
		$8 $&$   \ZZ $&$ 0 $&$  \ZZ_{14} $&$ \ZZ \plh \ZZ_2 $&$ \ZZ_2 $&$ \ZZ_2^2 \plh \ZZ_3 $&$ \ZZ_2      $&$ \ZZ_2           $& \\
		$9 $&$   \ZZ $&$ 0 $&$  \ZZ_{16} $&$ \ZZ \plh \ZZ_2 $&$ \ZZ_2 $&$ \ZZ_2^2 \plh \ZZ_3 $&$ \ZZ_2      $&$ \ZZ_2^2      $&$ \ZZ_3           $&$ \ZZ_2$ \\
		$10 $&$  \ZZ $&$ 0 $&$  \ZZ_{18} $&$ \ZZ \plh \ZZ_2 $&$ \ZZ_2 $&$ \ZZ_2^2 \plh \ZZ_3 $&$ \ZZ_2^2 \plh \ZZ_3 $&$ \ZZ_2^2 \plh \ZZ_3      $&$ \ZZ_2 \plh \ZZ_3           $&$ \ZZ_2 \plh \ZZ_{5}                $& \\
		$11 $&$   \ZZ $&$ 0 $&$  \ZZ_{20} $&$ \ZZ \plh \ZZ_2 $&$ \ZZ_2 $&$ \ZZ_2^2 \plh \ZZ_3 $&$ \ZZ_2^2 $&$ \ZZ_2^2      $&$ \ZZ_2^2 \plh \ZZ_3      $&$ \ZZ_2 \plh \ZZ_5                $&$ \ZZ_2                $&$ \ZZ_5$ \\
		$12 $&$   \ZZ $&$ 0 $&$  \ZZ_{22} $&$ \ZZ \plh \ZZ_2 $&$ \ZZ_2 $&$ \ZZ_2^2 \plh \ZZ_3 $&$ \ZZ_2^2 $&$ \ZZ_2^3  $&$ \ZZ_2^2 \plh \ZZ_3      $&$ \ZZ_2^2 \plh \ZZ_3 \plh \ZZ_5      $&$ \ZZ_2^2 $  & $0$       \\
		$13  $&$  \ZZ $&$ 0 $&$  \ZZ_{24} $&$ \ZZ \plh \ZZ_2 $&$ \ZZ_2 $&$ \ZZ_2^2 \plh \ZZ_3 $&$ \ZZ_2^2 \plh \ZZ_3 $&$ \ZZ_2^3 \plh\ZZ_3 $&$ \ZZ_2^2 \plh \ZZ_3      $&$ \ZZ_2^3 \plh \ZZ_3 \plh \ZZ_5   $&$ \ZZ_2^2           $&$ \ZZ_2 \plh \ZZ_3            $&$ \ZZ_2 \plh \ZZ_5           $&\\
		$14  $&$  \ZZ $&$ 0 $&$  \ZZ_{26} $&$ \ZZ \plh \ZZ_2 $&$ \ZZ_2 $&$ \ZZ_2^2 \plh \ZZ_3 $&$ \ZZ_2^2 $&$ \ZZ_2^3 $&$ \ZZ_2^3 \plh \ZZ_3 $&$ \ZZ_2^3 \plh \ZZ_3 \plh \ZZ_5      $&$ \ZZ_2^3      $&$ \ZZ_2^2        $&$ \ZZ_2 \plh \ZZ_5          $&$ \ZZ_7       $& \\
		$15 $&$    \ZZ $&$ 0 $&$  \ZZ_{28} $&$ \ZZ \plh \ZZ_2 $&$ \ZZ_2 $&$ \ZZ_2^2 \plh \ZZ_3 $&$ \ZZ_2^2 $&$ \ZZ_2^3 $&$ \ZZ_2^3 \plh \ZZ_3 $&$ \ZZ_2^3 \plh \ZZ_3 \plh \ZZ_5 $&$ \ZZ_2^4 $&$ \ZZ_2^2      $&$ \ZZ_2^2 \plh \ZZ_3 \plh \ZZ_5      $&$ \ZZ_2 \plh \ZZ_7     $&$ 0 $&$ \ZZ_7$\\
		$16 $&$   \ZZ $&$ 0 $&$  \ZZ_{30} $&$ \ZZ \plh \ZZ_2 $&$ \ZZ_2 $&$ \ZZ_2^2 \plh \ZZ_3 $&$ \ZZ_2^2 \plh \ZZ_3 $&$ \ZZ_2^3 \plh \ZZ_3 $&$ \ZZ_2^3 \plh \ZZ_3 $&$ \ZZ_2^4 \plh \ZZ_3 \plh \ZZ_5 $&$ \ZZ_2^4 \plh \ZZ_3 \plh \ZZ_5 $&$ \ZZ_2^3 \plh \ZZ_3 \plh \ZZ_5 $&$ \ZZ_2^3 \plh \ZZ_3 \plh \ZZ_5 $&$ \ZZ_2^2 \plh \ZZ_7 $&$ 0 $&$ \ZZ_2 $ \\
\end{tabular}
\end{table}
\end{landscape}



  \bibliographystyle{alpha}
 \bibliography{thesis}

\begin{thebibliography}{{"Th}15}

\bibitem[Arn70]{Arn}
V.~I. Arnold.
\newblock On some topological invariants of algebraic functions.
\newblock In {\em Vladimir I. Arnold-Collected Works}, pages 199--221.
  Springer, 1970.

\bibitem[BC74]{pi}
J.~S. Birman and J.~Cannon.
\newblock {\em Braids, Links, and Mapping Class Groups. (AM-82)}.
\newblock Princeton University Press, 1974.

\bibitem[BCP97]{magma}
W.~Bosma, J.~Cannon, and C.~Playoust.
\newblock The {M}agma algebra system. {I}. {T}he user language.
\newblock {\em J. Symbolic Comput.}, 24(3-4):235--265, 1997.
\newblock Computational algebra and number theory (London, 1993).

\bibitem[CG10]{ginzburg}
N.~Chriss and V.~Ginzburg.
\newblock {\em Representation theory and complex geometry}.
\newblock Modern Birkh\"auser Classics. Birkh\"auser Boston, Inc., Boston, MA,
  2010.
\newblock Reprint of the 1997 edition.

\bibitem[Chu12]{church}
T.~Church.
\newblock Homological stability for configuration spaces of manifolds.
\newblock {\em Invent. Math.}, 188(2):465--504, 2012.

\bibitem[CLM07]{cohen}
F.~R. Cohen, T.~J. Lada, and P.~May.
\newblock {\em The homology of iterated loop spaces}, volume 533.
\newblock Springer, 2007.

\bibitem[CP15]{Can}
F.~Cantero and M.~Palmer.
\newblock On homological stability for configuration spaces on closed
  background manifolds.
\newblock {\em Doc. Math.}, 20:753--805, 2015.

\bibitem[FHT01]{rationalhomotopy}
Y.~Felix, S.~Halperin, and J.~Thomas.
\newblock {\em Rational homotopy theory}, volume 205 of {\em Graduate Texts in
  Mathematics}.
\newblock Springer-Verlag, New York, 2001.

\bibitem[Fin47]{lucas}
N.~J. Fine.
\newblock Binomial coefficients modulo a prime.
\newblock {\em Amer. Math. Monthly}, 54:589--592, 1947.

\bibitem[Fuk70]{Fuks}
D.~B. Fuks.
\newblock Cohomology of the braid group {${\rm mod}\ 2$}.
\newblock {\em Funkcional. Anal. i Prilov zen.}, 4(2):62--73, 1970.

\bibitem[Hat02]{hatcherbook}
A.~Hatcher.
\newblock {\em Algebraic Topology}.
\newblock Algebraic Topology. Cambridge University Press, 2002.

\bibitem[KM16]{Kup}
A.~Kupers and J.~Miller.
\newblock Sharper periodicity and stabilization maps for configuration spaces
  of closed manifolds.
\newblock {\em Proc. Amer. Math. Soc.}, 144(12):5457--5468, 2016.

\bibitem[Nag15]{Na}
R.~Nagpal.
\newblock {\em F{I}-modules and the cohomology of modular representations of
  symmetric groups}.
\newblock ProQuest LLC, Ann Arbor, MI, 2015.
\newblock Thesis (Ph.D.)--The University of Wisconsin - Madison.

\bibitem[Nap03]{napolitano}
F.~Napolitano.
\newblock On the cohomology of configuration spaces on surfaces.
\newblock {\em J. Lond. Math. Soc.}, 68(2):477--492, 2003.

\bibitem[RW13]{randal}
O.~Randal-Williams.
\newblock Topological chiral homology abd configuration spaces of spheres.
\newblock {\em Morfismos}, 17(2):57--69, 2013.

\bibitem[Sal04]{salvatore}
P.~Salvatore.
\newblock Configuration spaces on the sphere and higher loop spaces.
\newblock {\em Math. Z.}, 248(3):527--540, 2004.

\bibitem[Sev84]{sevryuk}
M.~B. Sevryuk.
\newblock The cohomology of projectively compactified complex swallowtails and
  their complements.
\newblock {\em Russian Mathematical Surveys}, 39(5):285, 1984.

\bibitem[{"Th}15]{sage}
{"The Sage Developers"}.
\newblock {\em {S}age {M}athematics {S}oftware ({V}ersion 6.7)}, 2015.
\newblock \url{sagemath.org}.

\bibitem[Vai78]{Vain}
F.~V. Vainshtein.
\newblock The cohomology of braid groups.
\newblock {\em Funktsional. Anal. i Prilozhen.}, 12(2):72--73, 1978.

\bibitem[Vas01]{vassiliev}
V.~A. Vassiliev.
\newblock {\em Introduction to topology}, volume~14 of {\em Student
  Mathematical Library}.
\newblock American Mathematical Society, Providence, RI, 2001.
\newblock Translated from the 1997 Russian original by A. Sossinski.

\end{thebibliography}
\end{document}